\documentclass[12pt,oneside,reqno]{amsart}

\usepackage{amssymb}
\usepackage{amsmath}
\usepackage{dsfont}
\usepackage{titlesec}
\usepackage[dvipsnames]{xcolor}
\titleformat{name=\section}{}{\thetitle.}{0.8em}{\centering\scshape}
\titlespacing*{\section}{0pt}{1.1\baselineskip}{\baselineskip}
\usepackage[a4paper, total={6in, 8in}]{geometry}

\newtheorem{Theorem}{\textbf Theorem}
\newtheorem{Lemma}{\textbf Lemma}
\newtheorem{Proposition}{\textbf Proposition}

\newtheorem{Question}{\textbf Question}
\newtheorem{Corollary}{\textbf Corollary}
\newtheorem{Example}{\textbf Example}

\begin{document}
\title[Jacobson radicals of Ore extensions]{Jacobson radicals of Ore extensions}

\author[Shin]{Jooyoung Shin}

\address{Department of Mathematical Sciences, Kent State University, Kent OH 44242, USA.}

\email{jshin5@kent.edu}

\subjclass[2020]{16N20, 16N40.}

\keywords{Jacobson radical; Ore extension; locally torsion automorphism; polynomial identity rings; locally nilpotent derivation}

\begin{abstract}
Let $R$ be a ring, $\sigma$ be an automorphism of $R$, and $D$ be a $\sigma$-derivation on $R$. We will show that if $R$ is an algebra over a field of characteristic $0$ and $D$ is $q$-skew, then $J(R[x;\sigma,D])=I\cap R+I_0$ where $I=\{r\in R : rx\in J(R[x;\sigma,D])\}$ and $I_0=\{\sum_{i\geq 1}r_ix^i: r_i\in I\}$. We will prove that $J(R[x;\sigma,D])\cap R$ is nil if $\sigma$ is locally torsion and one of the following conditions is given: (1) $R$ is a PI-ring, (2) $R$ is an algebra over a field of characteristic $p>0$ and $D$ is a locally nilpotent derivation such that $\sigma D=D\sigma$. This answers partially an open question by Greenfeld, Smoktunowicz and Ziembowski. 
\end{abstract}

\maketitle

\section{Introduction}

Let $R$ be a ring and $\sigma$ be an automorphism of $R$. An additive map $D: R \to R$ that satisfies $D(ab)=\sigma(a)D(b)+D(a)b$ for all $a,b\in R$ is called a $\sigma$-derivation of $R$. If for every $a\in R$, there exists an integer $n=n_a>0$ such that $\sigma^n(a)=a$, then $\sigma$ is called locally torsion. If for every $a\in R$, there is an integer $m=m_a>0$ such that $D^m(a)=0$, then $D$ is called locally nilpotent.

An Ore extension $R[x;\sigma,D]$ is a ring of polynomials of form $a_nx^n+\dots +a_1x+a_0$ with $n\geq 0$ and $a_0,\dots ,a_n\in R$ with standard addition and multiplication given by $xa=\sigma(a)x+d(a)$ for all $a\in R$. There are two important special cases of Ore extensions. When $\sigma=1$, the Ore extension is called a differential polynomial ring or a skew polynomial ring of derivation type and denoted by $R[x;D]$. When $D=0$, the Ore extension is called a skew polynomial ring of automorphism type and denoted by $R[x;\sigma]$.

Let $J(R)$ be the Jacobson radical of $R$. It is well-known that every element $r_1$ in $J(R)$ is quasi-regular which means that there exists an element $r_2$ in $J(R)$ such that 
\begin{center}
    $0=r_1+r_2+r_1r_2=r_1+r_2+r_2r_1$.
\end{center}

One famous open question in ring theory is the Koethe conjecture asking whether every one-sided nil ideal is contained in a two-sided nil ideal \cite{ko}. In 1972, Krempa showed that Koethe conjecture is equivalent to whether a polynomial ring over a nil ring is Jacobson radical \cite{P11}. In 1956, Amitsur proved that the Jacobson radical of a polynomial ring, $J(R[x])$, is equal to $(J(R[x])\cap R)[x]$ and $J(R[x])\cap R$ is a nil ideal of $R$ \cite{P7}. 
In 1983, Ferrero, Kishimoto, and Motose proved that $J(R[x;D])=(J(R[x;D])\cap R)[x;D]$ \cite{P5}. They showed that if $R$ is a commutative ring, then $J(R[x;D])\cap R$ is nil \cite{P5}. 
In 1980, Bedi and Ram proved that the Jacobson radical of a skew polynomial ring of automorphism type, $J(R[x;\sigma])$, is equal to $K\cap J(R)+K_0$ where $K=\{r\in R : rx\in J(R[x;\sigma])\}$ and $K_0=\{\sum_{i\geq 1}r_ix^i:r\in K\}$ \cite{P13}. In particular, they showed that if $\sigma$ is locally torsion, then $K$ is nil and so $J(R[x;\sigma])=K[x;\sigma]$ \cite{P13}.
However, Amitsur's theorem does not hold for general Ore extensions. Bedi and Ram showed a counter-example \cite{P13}. In section 2, we will extend their result to Ore extension and show the following theorem.

\begin{Theorem}\label{t1}
    Let $R$ be an algebra over a field of characteristic $0$. Let $\sigma$ and $D$ be an automorphism and a $q$-skew $\sigma$-derivation on $R$. Then $R[x;\sigma,D]=I\cap J(R) +I_0$ where $I=\{r\in R: rx\in J(R[x;\sigma,D])\}$ and $I_0=\{\sum_{i\geq 1}r_ix^i:r_i\in I\}$.
\end{Theorem}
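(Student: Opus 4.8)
Write $S=R[x;\sigma,D]$ and $J=J(S)$, and keep the notation $I=\{r\in R: rx\in J\}$ and $I_0=\{\sum_{i\ge1}r_ix^i:r_i\in I\}$. The plan is to prove the two inclusions $J\supseteq I\cap J(R)+I_0$ and $J\subseteq I\cap J(R)+I_0$ separately. For $\supseteq$, the containment $I_0\subseteq J$ is immediate: if $r\in I$ then $rx\in J$, and since $J$ is a right ideal $rx^i=(rx)x^{i-1}\in J$, so any $\sum_{i\ge1}r_ix^i$ with $r_i\in I$ lies in $J$. The containment $I\cap J(R)\subseteq J$ already requires an argument: for $r\in I\cap J(R)$ I would verify quasi-regularity of all translates $fr$ ($f\in S$) by separating degrees, using $r\in J(R)$ to control the degree-$0$ part of $1+fr$ and $rx\in J$ to control the positive-degree part. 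The $q$-skew relation $D\sigma=q\sigma D$ is exactly what lets one move $x$ past coefficients in a controlled way (a $q$-Leibniz expansion) throughout this computation.

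For the reverse inclusion, take $f=a_0+a_1x+\cdots+a_nx^n\in J$. It suffices to establish two claims: (A) $a_ix\in J$ for every $i$, so that each $a_i\in I$; and (B) $I\subseteq J(R)$. Granting these, $a_0\in I\subseteq J(R)$ gives $a_0\in I\cap J(R)$, while $\sum_{i\ge1}a_ix^i\in I_0$, whence $f\in I\cap J(R)+I_0$; note that (B) also reconciles the two forms of the conclusion, since $I\subseteq J(R)$ forces $I\cap J(R)=I=I\cap R$. The engine for (A) is the algebra automorphism $\Phi$ of $S$ determined by $\Phi(x)=qx$ and $\Phi|_R=\sigma^{-1}$; one checks that $\Phi$ respects $xa=\sigma(a)x+D(a)$ precisely because $q$-skewness gives $\sigma^{-1}D=qD\sigma^{-1}$, and $\Phi$ is invertible since $q$ is a unit of the ground field. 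As $J$ is invariant under every automorphism, $\Phi^m(f)=\sum_i\sigma^{-m}(a_i)q^{mi}x^i\in J$ for all $m$. Because $R$ is an algebra over a field of characteristic $0$, that field is infinite, and I would feed the scalars $q^{mi}$ into a Vandermonde/averaging argument to isolate the homogeneous contributions, combined with an induction on $\deg f$ that peels off the top term $a_nx^n$ (legitimate once $a_nx\in J$, as then $a_nx^n\in J$). Claim (B) I would then obtain by showing that every element of $I$ is quasi-regular in $R$ — in effect that $I$ is a nil (hence quasi-regular) ideal of $R$ — by translating $rx\in J$ back into quasi-regularity of $r$ inside $R$, again via the $q$-skew commutation.

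The main obstacle is claim (A), and precisely the step that upgrades "each homogeneous piece contributes to $J$" into the sharp conclusion $a_ix\in J$. When $D=0$ the ring $S$ is $\mathbb{Z}_{\ge0}$-graded and the grading automorphisms $x\mapsto\lambda x$, with $\lambda$ ranging over an infinite field, make $J$ a graded ideal outright; this is the clean Bedi--Ram mechanism. For $D\ne0$ no such grading exists, since $x\mapsto\lambda x$ fails to respect $xa=\sigma(a)x+D(a)$, and the only scaling symmetry available is the single twisted automorphism $\Phi$, whose coefficient-twist by $\sigma^{-m}$ obstructs a naive Vandermonde. The crux of the proof is therefore to show that the $q$-skew commutation is nevertheless rigid enough — through the $q$-binomial expansion of $x^ka$ together with the characteristic-$0$ invertibility of the resulting scalar coefficients — to force the derivation ``tail'' of $f$ to behave as in the graded case, so that $\Phi$ and the infinite ground field jointly recover the coefficientwise conclusion that Bedi--Ram extracted from an honest grading. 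I expect the uniform bookkeeping of these lower-order, $D$-generated terms across all degrees to be the principal technical difficulty.
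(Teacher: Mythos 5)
Your outline of the two inclusions is reasonable, but claim (B) --- that $I\subseteq J(R)$, to be proved by showing every element of $I$ is quasi-regular (indeed nil) in $R$ --- is false, and the paper's own Example \ref{ex1} (Bedi--Ram) refutes it under the hypotheses of the theorem: for $R=\oplus_{i\in\mathbb{Z}}S_i$ over a field of characteristic $0$ with the shift automorphism and $D=0$ (which is $q$-skew for any $q$), one has $I=R$ while $J(R)=\oplus_{i}J(S_i)\neq R$. If (B) held, the theorem would collapse to $J(R[x;\sigma,D])=I[x;\sigma,D]$, which is precisely the equality that example is designed to disprove, so this step cannot be repaired. What is actually true, and all that is needed, is the weaker statement that the \emph{constant term} of an element of $J:=J(R[x;\sigma,D])$ lies in $J(R)$: once (A) is known, $a_0=f-\sum_{i\geq 1}a_ix^i\in J\cap R\subseteq J(R)$, the last inclusion coming from Lemma \ref{l4} since $R$ is a direct summand of $R[x;\sigma,D]$ as an $R$-module.

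For claim (A) you correctly identify the obstruction --- $x\mapsto\lambda x$ is not multiplicative when $D\neq 0$, so $J$ is not visibly graded --- but you leave the resolution open, and the paper's resolution is structurally different from the ``recover the graded behaviour by averaging'' route you sketch. The paper only ever extracts the \emph{leading} monomial: it tensors with $\mathbb{Z}[\zeta]$, uses the scaling maps and a Vandermonde determinant to get $da_nx^n\in J$ for some $d\in\mathbb{Z}[\zeta]$, and then multiplies by the Galois conjugates of $d$ to replace $d$ by a nonzero rational integer, which characteristic $0$ removes (Lemma \ref{l5}, Corollary \ref{c1}). The decisive step is Lemma \ref{l6}: if $J\neq 0$ then $I\neq 0$, proved by taking a minimal-degree monomial $a_nx^n\in J$, using the $q$-binomial expansion of $a_nx^nr$ and minimality to force $a_n\sigma^{n-1}D(r)=0$, hence $a_nD(R)=0$, which makes $(a_nxR[x;\sigma,D])^n\subseteq J$ and so $a_nx\in J$. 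The coefficientwise conclusion $J\subseteq I[x;\sigma,D]$ is then obtained not by isolating each $a_ix^i$ directly but by showing $I$ is a $(\sigma,D)$-stable ideal, passing to $(R/I)[x;\bar{\sigma},\bar{D}]$, and applying Lemma \ref{l6} there to conclude its radical is zero. These three ingredients --- the Galois-norm trick, the $a_nD(R)=0$ computation, and the quotient argument --- are the ideas missing from your proposal.
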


In 2016, Madill proved that if $R$ is a polynomial identity ring, then $J(R[x;D])\cap R$ is nil \cite{P4}. In 2017, Smoktunowicz proved that if $R$ is an algebra over a field $F$ of characteristic $p>0$ and $d$ is a locally nilpotent derivation on $R$, then $J(R[x;D])\cap R$ is a nil ideal of $R$ \cite{P8}. We may wonder whether such results can be gained on Ore extensions. In 2019, Greenfeld, Smoktunowicz and Ziembowski posed the question below. In section 3, we will give partial answers to the question.

\begin{Question}\cite[Question 6.17]{P2}\label{q1}
    Consider $J(R[x;\sigma,D])\cap R$. Is it nil if we assume that $D$ is locally nilpotent?
What if we assume that $\sigma$ is locally torsion?
\end{Question}

\section{Jacobson radicals of Ore extensions}
Throughout this section, let $R$ be an algebra over a field $F$. Let $\sigma$ and $D$ be an automorphism and a $\sigma$-derivation on $R$. If there exists a nonzero $q\in F$ such that $D\sigma =q\sigma D$, then $D$ is said to be $q$-skew. We can find the following useful formula in \cite{gl} and \cite{P15}.

\begin{Proposition}\label{p1}
    Let $\sigma$ be an endomorphism of $R$, and $D$ be a $q$-skew $\sigma$-derivation on $R$. Then for any $r\in R$ and non-negative integer $k$,
    \begin{align*}
        x^kr=\sum_{i=0}^k \binom{k}{i}_q \sigma^{i}D^{k-i}(r)x^i
    \end{align*}
    where $\binom{k}{j}_q$ is the evaluation at $t=q$ of a polynomial function 
    \begin{align*}
        \binom{k}{j}_t=\frac{(t^k-1)(t^{k-1}-1)\dots(t^{k-j+1}-1)}{(t^j-1)(t^{j-1}-1)\dots(t-1)}.
    \end{align*}
    In particular, for any positive integer $m$, if $q$ is a primitive $m$-th root of unity, then $\binom{km}{jm}_q=\binom{k}{j}$.
\end{Proposition}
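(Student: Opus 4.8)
The plan is to establish the displayed commutation formula by induction on $k$, and then to deduce the ``in particular'' clause by evaluating the Gaussian binomial at a root of unity. The base case $k=0$ reads $r=r$, and $k=1$ is exactly the defining relation $xr=\sigma(r)x+D(r)$ of the Ore extension. For the inductive step I would write $x^{k+1}r=x\,(x^{k}r)$, apply the inductive hypothesis, and then push a single $x$ past each monomial $\sigma^{i}D^{k-i}(r)x^{i}$. Using $xa=\sigma(a)x+D(a)$ with $a=\sigma^{i}D^{k-i}(r)$ produces $\sigma^{i+1}D^{k-i}(r)x^{i+1}$ together with $D\sigma^{i}D^{k-i}(r)\,x^{i}$. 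The key point is to commute the leading $D$ through $\sigma^{i}$: from the $q$-skew hypothesis $D\sigma=q\sigma D$ a short auxiliary induction gives $D\sigma^{i}=q^{i}\sigma^{i}D$, hence $D\sigma^{i}D^{k-i}(r)=q^{i}\sigma^{i}D^{k-i+1}(r)$.

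After substituting, I would collect the coefficient of $\sigma^{j}D^{k+1-j}(r)x^{j}$ for each $j$. The term with $x^{i+1}$ and $i=j-1$ contributes $\binom{k}{j-1}_{q}$, while the term with $x^{i}$ and $i=j$ contributes $q^{j}\binom{k}{j}_{q}$, so the desired coefficient $\binom{k+1}{j}_{q}$ follows once I establish the $q$-Pascal identity
\[
\binom{k+1}{j}_{q}=\binom{k}{j-1}_{q}+q^{j}\binom{k}{j}_{q},
\]
with the conventions $\binom{k}{-1}_{q}=\binom{k}{k+1}_{q}=0$. This identity is a routine manipulation of the defining rational expression for $\binom{\cdot}{\cdot}_{t}$; the same recursion, started from $\binom{k}{0}_{t}=\binom{k}{k}_{t}=1$, also shows that each $\binom{k}{j}_{t}$ is genuinely a polynomial, so evaluation at $t=q$ is meaningful.

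For the last assertion, I would use that $\binom{km}{jm}_{t}$ is a polynomial, hence continuous, so its value at $t=q$ equals the limit as $t\to q$ of the rational function
\[
\binom{km}{jm}_{t}=\prod_{s=1}^{jm}\frac{t^{(k-j)m+s}-1}{t^{s}-1}.
\]
When $m\nmid s$, both $t^{(k-j)m+s}-1$ and $t^{s}-1$ tend to the nonzero value $q^{s}-1$ (using $q^{m}=1$), so such factors contribute $1$. When $s=um$ with $1\le u\le j$, the substitution $w=t^{m}\to 1$ turns the factor into $\frac{w^{k-j+u}-1}{w^{u}-1}\to\frac{k-j+u}{u}$. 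Multiplying the limits gives $\prod_{u=1}^{j}\frac{k-j+u}{u}=\binom{k}{j}$, as claimed.

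The only genuinely delicate step is this final evaluation: at $t=q$ both numerator and denominator of the product form acquire zero factors, so one cannot simply substitute. I handle this by first invoking the continuity of the polynomial $\binom{km}{jm}_{t}$ to replace its value by a limit, then separating the non-vanishing factors (which cancel in pairs up to $q^{m}=1$) from the indeterminate ones (resolved by the standard limit $\frac{w^{a}-1}{w^{b}-1}\to a/b$). The inductive part, by contrast, is mechanical once the relation $D\sigma^{i}=q^{i}\sigma^{i}D$ and the $q$-Pascal recursion are in place.
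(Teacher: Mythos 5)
The paper does not prove this proposition at all; it is quoted from Goodearl--Letzter and Grzeszczuk--Leroy--Matczuk, so there is no internal proof to compare against. Your argument is the standard one and is essentially correct: the induction on $k$ with $xa=\sigma(a)x+D(a)$, the commutation $D\sigma^{i}=q^{i}\sigma^{i}D$, and the $q$-Pascal identity $\binom{k+1}{j}_{q}=\binom{k}{j-1}_{q}+q^{j}\binom{k}{j}_{q}$ (which is the correct orientation of the recursion for the convention $D\sigma=q\sigma D$) fit together exactly as you describe, and the same recursion does establish polynomiality of $\binom{k}{j}_{t}$.

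The one soft spot is the final evaluation. Your continuity/limit argument is literally valid only when $q$ is a complex number, whereas in this paper $q$ lies in an arbitrary field $F$, possibly of positive characteristic, where ``$t\to q$'' has no meaning. The fix is routine but should be said: your complex computation shows that the integer polynomial $\binom{km}{jm}_{t}-\binom{k}{j}$ vanishes at every complex primitive $m$-th root of unity, hence is divisible by the cyclotomic polynomial $\Phi_{m}(t)$ in $\mathbb{Z}[t]$; since a primitive $m$-th root of unity in any field is a root of the image of $\Phi_{m}$, the identity $\binom{km}{jm}_{q}=\binom{k}{j}$ (read in the prime field) follows by specialization. With that one sentence added, the proof is complete.
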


\begin{Lemma}\label{l1}
    Let $\sigma$ be an automorphism on $R$ and $D$ be a $q$-skew $\sigma$-derivation on $R$. Then $J(R[x;\sigma,D])\cap R$ is closed under $\sigma$.
\end{Lemma}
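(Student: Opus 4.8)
The plan is to exploit the fact that the Jacobson radical is stable under any ring automorphism of $S := R[x;\sigma,D]$: if $\Phi$ is such an automorphism then $\Phi(J(S))=J(S)$. Thus it suffices to produce a ring automorphism $\Phi$ of $S$ whose restriction to $R$ equals $\sigma$ and which preserves $R$, for then
\[
\sigma(J(S)\cap R)=\Phi(J(S)\cap R)\subseteq \Phi(J(S))\cap \Phi(R)=J(S)\cap R,
\]
which is exactly the asserted closure.

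To build $\Phi$ I would invoke the universal property of the Ore extension: a ring homomorphism out of $R[x;\sigma,D]$ is determined by a ring homomorphism $f$ on $R$ together with the image $t$ of $x$, subject only to the compatibility relation $t\,f(r)=f(\sigma(r))\,t+f(D(r))$ for all $r\in R$. I take $f=\sigma\colon R\to R\subseteq S$ and, crucially, $t=q^{-1}x$ rather than $x$ itself. The compatibility condition then reads
\begin{align*}
q^{-1}x\,\sigma(r)=\sigma^2(r)\,q^{-1}x+\sigma(D(r)).
\end{align*}
Expanding the left-hand side via $x\sigma(r)=\sigma^2(r)x+D(\sigma(r))$ and using that $q^{-1}\in F$ is central, this collapses to the single requirement $D(\sigma(r))=q\,\sigma(D(r))$, i.e.\ $D\sigma=q\sigma D$. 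This is precisely the $q$-skew hypothesis, and the scalar $q^{-1}$ on $x$ was chosen exactly to absorb the factor $q$. Hence $\Phi$ is a well-defined ring endomorphism of $S$ with $\Phi|_R=\sigma$ and $\Phi(x)=q^{-1}x$.

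It remains to check $\Phi$ is bijective. Since $F$ is central and is fixed by $\sigma$ and annihilated by $D$, one computes $\Phi\bigl(\sum_i r_i x^i\bigr)=\sum_i q^{-i}\sigma(r_i)x^i$; as $\sigma$ is an automorphism of $R$ and each $q^{-i}\neq 0$, this map is clearly invertible, with inverse $\sum_i r_i x^i\mapsto \sum_i q^{i}\sigma^{-1}(r_i)x^i$. Thus $\Phi\in\operatorname{Aut}(S)$ and $\Phi(R)=R$, and the displayed inclusion above finishes the argument.

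The genuinely substantive point is the choice of $t=q^{-1}x$: picking $t=x$ would force $\sigma D=D\sigma$, which need not hold, whereas the $q$-skew relation is exactly what makes the twisted choice work. Once that scaling is identified, the only thing to be careful about is the interaction of $\sigma$ and $D$ with the base field $F$ (that $F$ is central, $\sigma$-fixed, and $D$-annihilated), which guarantees both that the compatibility relation reduces cleanly and that $q^{-1}$ commutes past $x$; everything else is the routine verification of a ring homomorphism via the universal property.
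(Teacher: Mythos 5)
Your proof is correct and is essentially the paper's argument: the automorphism $\Phi$ you construct, with $\Phi|_R=\sigma$ and $\Phi(x)=q^{-1}x$, is exactly the map $\sigma^*\bigl(\sum a_ix^i\bigr)=\sum q^{-i}\sigma(a_i)x^i$ used in the paper, and both proofs conclude by the invariance of the Jacobson radical under automorphisms. The only difference is cosmetic: you verify multiplicativity via the universal property of the Ore extension (one relation to check, which is precisely the $q$-skew identity), whereas the paper checks it by a direct computation with the $q$-binomial expansion of $x^nb$.
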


\begin{proof}
    Define a map $\sigma^*: R[x;\sigma,D] \rightarrow R[x;\sigma, D]$ by $\sigma^*(\sum_{i=0}^{n}a_ix^i)=\sum_{i=0}^nq^{-i}\sigma(a_i)x^i$ for any $a_i\in R$. Since $\sigma$ is an automorphism and $q$ is in a field $F$, $\sigma^*$ is well-defined and bijective. Now, we claim that $\sigma^*$ is an automorphism. Let $f(x)=\sum_{i=0}^na_ix^i$ and $g(x)=\sum_{i=0}^mb_ix^i$ for $n\leq m$ be polynomials in $R[x;\sigma,D]$.
    \begin{align*}
        \sigma^*(f(x)+g(x))&=\sigma^*(\sum_{i=0}^n(a_i+b_i)x^i+\sum_{i=n+1}^mb_ix^i)\\
        &=\sum_{i=0}^nq^{-i}\sigma(a_i+b_i)x^i+\sum_{i=n+1}^mq^{-i}\sigma(b_i)x^i\\
        &=\sum_{i=0}^n(q^{-i}\sigma(a_i)+q^{-i}\sigma(b_i))x^i+\sum_{i=n+1}^mq^{-i}\sigma(b_i)x^i\\
        &=\sum_{i=0}^nq^{-i}\sigma(a_i)x^i + \sum_{i=0}^mq^{-i}\sigma(b_i)x^i\\
        &=\sigma^*(f(x))+\sigma^*(g(x)).
    \end{align*}
    To show that $\sigma^*(f(x)g(x))=\sigma^*(f(x))\sigma^*(g(x))$, it suffices to show that $\sigma^*(ax^nbx^m)=\sigma^*(ax^n)\sigma^*(bx^m)$. Since $D$ is $q$-skew, by Proposition \ref{p1}
    \begin{align*}
        \sigma^*(ax^nbx^m)&=\sigma^*(\sum_{i=0}^n\binom{n}{i}_qa\sigma^iD^{n-i}(b)x^{i+m})\\
        &=\sum_{i=0}^n\binom{n}{i}_q q^{-(i+m)}\sigma (a\sigma^iD^{n-i}(b))x^{i+m}\\
        &=\sum_{i=0}^n\binom{n}{i}_q q^{-(i+m)}\sigma (a)\sigma^{i+1}D^{n-i}(b)x^{i+m}\\
        &=\sum_{i=0}^n\binom{n}{i}_q q^{-(i+m)}q^{-(n-i)}\sigma (a)\sigma^iD^{n-i}\sigma(b)x^{i+m}\\
        &=q^{-n-m}\sigma (a)\left(\sum_{i=0}^n\binom{n}{i}_q \sigma^iD^{n-i}\sigma(b)x^{i+m}\right)\\
        &=(q^{-n}\sigma (a)x^n)(q^{-m}\sigma(b)x^m)\\
        &=\sigma^*(ax^n)\sigma^*(bx^m).
    \end{align*}
    Thus, $\sigma^*$ is an automorphism on $R[x;\sigma,D]$.
    We know that Jacobson radicals are closed under isomorphisms. That means that $\sigma^*(J(R[x;\sigma,D]))\subseteq J(R[x;\sigma,D])$.
    Then for any $a\in J(R[x;\sigma,D])\cap R$,
    we have that $\sigma(a)=\sigma^*(a)\in J(R[x;\sigma,D])\cap R$. 

\end{proof}

\begin{Lemma}\label{l2}
    Let $R$ be a ring, $\sigma$ be an automorphism of $R$, and $D$ be a $q$-skew $\sigma$-derivation on $R$.
    Then, $J(R[x;\sigma, D])\cap R$ is a $D$-ideal of $R$.
\end{Lemma}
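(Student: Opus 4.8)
The plan is to combine the defining commutation relation $xa=\sigma(a)x+D(a)$ with the fact, established in Lemma \ref{l1}, that $J(R[x;\sigma,D])\cap R$ is closed under $\sigma$. Throughout, write $J=J(R[x;\sigma,D])$ for brevity, and recall that $J$ is a two-sided ideal of the Ore extension.

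First I would verify that $J\cap R$ is a two-sided ideal of $R$. This is routine: for $a\in J\cap R$ and $r\in R$, both $ra$ and $ar$ lie in $J$ (since $J$ is a two-sided ideal of $R[x;\sigma,D]$ and $r,a\in R\subseteq R[x;\sigma,D]$) and both lie in $R$, hence in $J\cap R$; closure under addition is immediate. So the only substantive content is closure under $D$.

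The heart of the argument is exactly that closure. Fix $a\in J\cap R$. Because $a\in J$ and $J$ is a left ideal, $xa\in J$. Applying the commutation rule, rewrite $xa=\sigma(a)x+D(a)$, so that $D(a)=xa-\sigma(a)x$. By Lemma \ref{l1}, $\sigma(a)\in J\cap R\subseteq J$, and since $J$ is a right ideal, $\sigma(a)x\in J$. Therefore $D(a)=xa-\sigma(a)x$ is a difference of two elements of $J$, hence $D(a)\in J$; as $D(a)\in R$, we conclude $D(a)\in J\cap R$, which is precisely closure under $D$.

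I do not expect a genuine obstacle here: once Lemma \ref{l1} supplies $\sigma$-invariance, the single identity $D(a)=xa-\sigma(a)x$ does all the work, and the two-sidedness of $J$ keeps both $xa$ and $\sigma(a)x$ inside $J$. The one point that requires care is that isolating $D(a)$ genuinely needs $\sigma(a)\in J$ rather than merely $a\in J$ — this is exactly where the preceding lemma (and hence, indirectly, the $q$-skew hypothesis) enters the proof.
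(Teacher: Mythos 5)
Your proof is correct and takes essentially the same route as the paper's: both isolate $D(a)=xa-\sigma(a)x$ and use Lemma \ref{l1} to place $\sigma(a)x$ (and $xa$) inside $J(R[x;\sigma,D])$. The only detail the paper adds is passing to $R^*$, the ring with adjoined identity, so that $x$ itself is available as a multiplier when $R$ has no unit, using that $J(R[x;\sigma,D])$ remains an ideal of $R^*[x;\sigma,D]$.
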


\begin{proof}
    Since $J(R[x;\sigma,D])$ is an ideal of $R[x;\sigma,D]$, $J(R[x;\sigma, D])\cap R$  is an ideal of $R$. Take $a\in J(R[x;\sigma, D])\cap R$. By Lemma \ref{l1}, $\sigma(a)\in J(R[x;\sigma, D])\cap R$.
    Let $R^*$ be a ring $R$ with adjoined identity.
    We know that $J(R[x;\sigma, D])$ is an ideal of $R^*[x;\sigma,D]$.
    Then, $xa$ and $\sigma(a)x$ are in $J(R[x;\sigma, D])$.
    We get that $D(a)=xa-\sigma(a)x\in J(R[x;\sigma, D])$.
    Since $D(a)\in R$,  thus $D(a)\in J(R[x;\sigma, D])\cap R$.
\end{proof}

From Lemma \ref{l1} and \ref{l2},  we can define $(J(R[x;\sigma,D])\cap R)[x;\sigma,D]$. It is an ideal of $R[x;\sigma,D]$. However, in general, the equality
\begin{align*}
    J(R[x;\sigma,D])=(J(R[x;\sigma,D])\cap R)[x;\sigma,D]
\end{align*}
is false. One can find the following counterexample in \cite[Example 3.5]{P13}.

\begin{Example}\cite[Example 3.5]{P13}\label{ex1}
    Let $R=\oplus_{i\in \mathbb{Z}}S_i$ where $S_i=S$ is an arbitrary ring which is not Jacobson radical. Let $\sigma:R\rightarrow R$ be an automorphism such that $\sigma(\sum_{i\in \mathbb{Z}}a_i)=\sum_{i\in \mathbb{Z}}a_{i-1}$. Consider $R[x;\sigma]$. 
    Let $K=\{r\in R : rx\in J(R[x;\sigma])\}$. 
    Take any $a=\sum_{i\in \mathbb{Z}}a_i\in R$.
    If $a_i=0$ for all $i\notin [m,n]$, then $(axR[x;\sigma])^{n-m+2}=0$. Thus, $K=R$.
    Since $J(R[x;\sigma])=K\cap J(R)+K_0$ where $K_0=\{\sum_{i\geq 1}r_ix^i:r\in K\}$, so $J(R[x;\sigma])=J(R)+K_0$. Since $J(R)=\oplus_{i\in \mathbb{Z}}J(S_i)$, we obtain that $J(R[x;\sigma])\neq J(R)[x;\sigma]$.
\end{Example}

Now, one may ask the following question. 

\begin{Question}
    When is $J(R[x;\sigma,D])$ equal to $J(R[x;\sigma,D])\cap R$? If $\sigma$ is locally torsion or $D$ is locally nilpotent, then is $J(R[x;\sigma,D])$ equal to $J(R[x;\sigma,D])\cap R$?
\end{Question}

We will follow the idea of Bedi and Ram and prove Theorem \ref{t1}. Suppose that $R$ does not have identity $1$. Consider $R^*=R\oplus F$ which is the Dorroh extension of $R$ with componentwise addition and multiplication defined by $(r_1,m_1)(r_2,m_2)=(r_1r_2+m_2r_1+m_1r_2, m_1m_2)$ for $r_1, r_2\in R$ and $m_1,m_2\in F$. 
Then $R^*$ has identity $(0,1)$ and $R$ is a $F$-subalgebra of $R^*$.
Define $\sigma^*((r,m))=(\sigma(r),m)$ and $D^*((r,m))=(D(r),0)$ for any $(r,m)\in R\oplus F$. We can check that $\sigma^*$ and $D^*$ are an automorphism and $\sigma^*$-derivation on $R\oplus F$. Since $R^*[x;\sigma^*,D^*]/R[x;\sigma,D]\cong F[x]$ and $F$ is a field, so $J(R^*[x;\sigma^*,D^*])=J(R[x;\sigma,D])$. Thus, we can assume that $R$ has identity. 

One can find the following definitions and some useful lemmas in \cite{p}.
Let $S$ be a ring with identity $1$ and $R$ be a subring of $S$ with the same $1$. Let $V_S$ be a $S$-module and let $W_S$ be a submodule of $V_S$. If there exists a $S$-submodule $U_S$ of $V_S$ such that $V_S=W_S+U_S$, then we write $W_S|V_S$. 
The set $\{y_1=1, y_2, \dots, y_n\}$ is called a normalizing basis for $S$ over $R$ if the following conditions are satisfied:
\begin{enumerate}
    \item every element $r$ of $S$ can be written uniquely as
          \begin{align*}
                r=\beta_1y_1+\beta_2y_2+\dots+\beta_ny_n
          \end{align*}
       for some $\beta_1, \beta_2, \dots, \beta_n\in R$,
    \item there exist automorphisms $\phi_1, \phi_2, \dots, \phi_n$ of $R$ such that $y_i\beta=\phi_i(\beta)y_i$ for all $\beta\in R$. 
\end{enumerate}

\begin{Lemma}\label{l3}\cite[Theorem 16.3]{p}
    Let $S$ be a ring with identity $1$ and let $R$ be a subring with the same $1$. If $\{y_1=1, y_2, \dots, y_n\}$ is a normalizing basis for $S$ over $R$, then
    \begin{align*}
        (J(S))^n\subseteq J(R)S\subseteq J(S).
    \end{align*}
\end{Lemma}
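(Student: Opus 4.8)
The plan is to prove the two inclusions separately, the right-hand one being routine and the left-hand one (the nilpotency bound) carrying the real content. First I would record the elementary fact that $J(R)S$ is a two-sided ideal which moreover equals $SJ(R)$. Since each $\phi_i$ is an automorphism of $R$, it fixes the Jacobson radical, $\phi_i(J(R))=J(R)$, and the defining relation $y_i\beta=\phi_i(\beta)y_i$ then yields $y_iJ(R)=J(R)y_i$ for every $i$. Writing $S=\sum_i Ry_i$ and using $RJ(R)=J(R)$, one checks $J(R)S=\sum_i J(R)y_i=\sum_i y_iJ(R)=SJ(R)$, from which $S\cdot J(R)S=J(R)S=J(R)S\cdot S$ follows, so $J(R)S$ is an ideal of $S$.

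For $J(R)S\subseteq J(S)$, I would show that $J(R)$ annihilates every simple left $S$-module $M$. Fixing $0\neq m\in M$, simplicity gives $M=Sm=\sum_{i=1}^n R(y_im)$, so $M$ is generated by $n$ elements as an $R$-module. The relation $y_iJ(R)=J(R)y_i$ shows that $J(R)M$ is an $S$-submodule of $M$, hence equals $0$ or $M$; since $M$ is finitely generated over $R$, Nakayama's lemma rules out $J(R)M=M$, so $J(R)M=0$. Then $J(R)S\cdot M=J(R)M=0$ for every simple $M$, and intersecting annihilators gives $J(R)S\subseteq J(S)$.

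The harder inclusion $(J(S))^n\subseteq J(R)S$ I would obtain by passing to $T=S/J(R)S$. The uniqueness clause in the definition of a normalizing basis shows $J(R)S\cap R=J(R)$: if $r\in R$ lies in $\sum_i J(R)y_i$, then comparing with its unique expansion $r=ry_1$ forces all higher coefficients to vanish and $r\in J(R)$. Hence $T$ is again a normalizing extension, with normalizing basis the images $\bar y_1,\dots,\bar y_n$, over the base ring $C=R/J(R)$, which is semiprimitive. Because the surjection $S\to T$ carries $J(S)$ into $J(T)$, it suffices to prove $J(T)^n=0$; this nilpotency bound is the crux of the argument.

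To prove $J(T)^n=0$ I would use induced modules. The normalizing basis gives $_CT\cong C^n$ as a left $C$-module, so for each simple $C$-module $U$ the induced module $\mathrm{Ind}(U)=T\otimes_C U$ has $C$-length $n$. The powers $J(T)^j\,\mathrm{Ind}(U)$ form a descending chain of $T$-submodules inside a $C$-module of length $n$, so the chain stabilizes after at most $n$ steps at a $T$-submodule $P$ with $J(T)P=P$; as $P$ has finite $C$-length it is finitely generated over $T$, so Nakayama forces $P=0$ and therefore $J(T)^n\,\mathrm{Ind}(U)=0$. Finally, because $J(C)=0$ and the automorphisms $\phi_i$ permute the isomorphism classes of simple $C$-modules, the direct sum $\bigoplus_U \mathrm{Ind}(U)$ over all simple $C$-modules $U$ is a faithful $T$-module; since $J(T)^n$ annihilates each summand, $J(T)^n=0$. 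Lifting back, $(J(S))^n\subseteq\ker(S\to T)=J(R)S$. I expect the faithfulness step and the bookkeeping of the twisted $C$-actions on $\mathrm{Ind}(U)$ to be the most delicate points.
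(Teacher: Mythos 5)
The paper does not prove this lemma at all: it is imported verbatim from Passman \cite{p}, so there is no in-paper argument to measure yours against. Taken on its own, your proof is correct and complete, and it follows what is essentially the standard (and, as far as I can tell, Passman's own) route. All three stages check out: (i) $\phi_i(J(R))=J(R)$ gives $y_iJ(R)=J(R)y_i$, hence $J(R)S=\sum_i J(R)y_i=SJ(R)$ is a two-sided ideal; (ii) for a simple left $S$-module $M=Sm=\sum_i R(y_im)$, the subgroup $J(R)M$ is an $S$-submodule and Nakayama (legitimate because $M$ is finitely generated over $R$) forces $J(R)M=0$, giving $J(R)S\subseteq J(S)$; (iii) the uniqueness clause yields $J(R)S\cap R=J(R)$, so $T=S/J(R)S$ is a normalizing extension of the semiprimitive ring $C=R/J(R)$, and the induced-module argument ($T\otimes_C U$ has $C$-length $n$, the chain $J(T)^j(T\otimes_C U)$ reaches $0$ by length counting plus Nakayama, and $\bigoplus_U T\otimes_C U$ is faithful since $J(C)=0$) gives $J(T)^n=0$, whence $J(S)^n\subseteq\ker(S\to T)=J(R)S$. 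Two points deserve to be written out rather than asserted: first, that $\{\bar y_1,\dots,\bar y_n\}$ is genuinely a normalizing \emph{basis} of $T$ over $C$ (if $\sum_i\beta_iy_i\in J(R)S=\sum_i J(R)y_i$ then uniqueness in $S$ forces every $\beta_i\in J(R)$, which simultaneously proves $J(R)S\cap R=J(R)$ and the independence of the $\bar y_i$); second, that $T$ is free of rank $n$ as a \emph{right} $C$-module via $\bar y_iC=C\bar y_i$, which is what makes $T\otimes_C U\cong\bigoplus_i\bar y_i\otimes U$ a $C$-module of length exactly $n$ with $i$-th summand the $\bar\phi_i^{-1}$-twist of $U$, and is also what the faithfulness computation silently uses.
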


\begin{Lemma}\label{l4}\cite[Theorem 16.5]{p}
    Let $S$ be a ring and $R$ be a subring with the same identity $1$. Suppose that as left $R$-modules, we have $R_R|S_R$. Then $J(S)\cap R\subseteq J(R)$.
\end{Lemma}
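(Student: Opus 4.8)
The plan is to detect membership in $J(R)$ by one-sided quasi-regularity and then to push quasi-inverses that already live in $S$ down into $R$ along the splitting $R_R|S_R$. Recall the standard fact, available for any ring $R$ with $1$, that $a\in J(R)$ if and only if $1+ra$ is left invertible in $R$ for every $r\in R$; equivalently, the left ideal $Ra$ consists of left quasi-regular elements. This reduces the lemma to the following task: given $a\in J(S)\cap R$, produce for each $r\in R$ a genuine left inverse of $1+ra$ \emph{inside} $R$, starting from the fact that $a\in J(S)$ already supplies such an inverse in the larger ring $S$.

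First I would fix the projection coming from the hypothesis. Since $R_R|S_R$, there is a right $R$-submodule $U$ of $S$ with $S=R\oplus U$ as right $R$-modules; let $\pi\colon S\to R$ be the associated projection. The two features I need are that $\pi$ is a homomorphism of right $R$-modules, so $\pi(s\beta)=\pi(s)\beta$ for all $s\in S$ and $\beta\in R$, and that $\pi$ restricts to the identity on $R$, so in particular $\pi(1)=1$.

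Next I would run the computation. Let $a\in J(S)\cap R$ and let $r\in R$ be arbitrary. Because $a\in J(S)$ and $J(S)$ is an ideal of $S$, we have $ra\in J(S)$; since $S$ has an identity and every element of $J(S)$ is quasi-regular, $1+ra$ is a unit in $S$, say $u(1+ra)=1$ with $u\in S$. Now $1+ra\in R$, so applying $\pi$ and using that it is a right $R$-module map gives $\pi(u)(1+ra)=\pi\big(u(1+ra)\big)=\pi(1)=1$. Thus $v=\pi(u)\in R$ is a left inverse of $1+ra$ in $R$. As $r$ was arbitrary, $1+ra$ is left invertible in $R$ for every $r\in R$, which is exactly the criterion above for $a\in J(R)$; hence $J(S)\cap R\subseteq J(R)$.

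The step I expect to carry the real weight is the interplay between sidedness and invertibility. The module splitting is one-sided, so the projection is only a right $R$-map, and this forces me to use the matching one-sided description of $J(R)$: the computation yields only a left inverse in $R$, so it is essential that membership in $J(R)$ be detectable by left invertibility of every $1+ra$. Had the hypothesis been a left-module splitting instead, I would extract right inverses of every $1+ar$ and appeal to the symmetric criterion. Everything else—that $ra\in J(S)$, that $1+ra$ is a unit of $S$, and that $\pi$ fixes $R$ with $\pi(1)=1$—is routine once one uses that $R$ and $S$ share the identity $1$.
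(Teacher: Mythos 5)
Your proof is correct and is exactly the classical argument behind the cited result: the paper itself gives no proof of this lemma (it is quoted from Passman, Theorem~16.5), and the standard proof there proceeds just as you do, projecting a left inverse of $1+ra$ from $S$ onto $R$ via the right $R$-module splitting and invoking the one-sided invertibility criterion for $J(R)$. Your closing remark about matching the sidedness of the splitting to the sidedness of the radical criterion is precisely the point that makes the argument work.
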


\begin{Lemma}\label{l5}
    Let $R$ be an algebra over a field $F$.
    Let $\sigma$ and $D$ be an automorphism and $\sigma$-derivation on $R$. 
    \begin{enumerate}
        \item If $f(x)=a_nx^n+\dots+a_1x+a_0$ is a nonzero polynomial in $J(R[x;\sigma,D])$, then $J(R[x;\sigma,D])$ has a monomial $ma_nx^n$ for some $m\in \mathbb{Z}$.
        \item If $f(x)=x^na_n+\dots+xa_1+a_0$ is a nonzero polynomial in $J(R[x;\sigma,D])$, then $J(R[x;\sigma,D])$ has a monomial $x^n(ma_n)$ for some $m\in \mathbb{Z}$.
    \end{enumerate}
\end{Lemma}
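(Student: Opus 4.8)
The plan is to first invoke the reduction already set up before the statement: after passing to the Dorroh-type extension we may assume $R$ has an identity, so that $J(R[x;\sigma,D])$ is a genuine two-sided ideal of a unital ring and is therefore closed under left and right multiplication by $x$ as well as by elements of $R$. I would prove part (1) in detail and obtain part (2) by the symmetric argument (working with right coefficients, or passing to the opposite ring). Write $S=R[x;\sigma,D]$ and $J=J(S)$, and argue by induction on $n=\deg f$. The base case $n=0$ is immediate, since then $f=a_0\in J\cap R$ and $a_0$ is already the desired monomial with $m=1$. For the inductive step the goal is to manufacture, out of $f$, a new element of $J$ that still has an integer multiple of $a_n$ as its degree-$n$ coefficient but has strictly fewer nonzero lower coefficients; iterating this reduction then isolates a monomial $m a_n x^n\in J$.

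The engine of the induction is the defining relation $x r=\sigma(r)x+D(r)$: left multiplication by $x$ sends a monomial $b x^j$ to $\sigma(b)x^{j+1}+D(b)x^j$, so it raises the degree and applies $\sigma$ to the top coefficient, but simultaneously deposits a copy of $D(b)$ one degree lower. Because $J$ is an ideal, all the elements $x^k f x^{\ell}$ lie in $J$, and I would form suitable integer linear combinations of them — in effect an $n$-fold finite difference in the left-multiplication operator — chosen so that every term below the top degree cancels. Two facts keep this under control at the level of coefficients: by Lemma~\ref{l1} the set $J\cap R$ is stable under $\sigma$, and by Lemma~\ref{l2} it is stable under $D$, so all the coefficients produced by the iterated applications of $\sigma$ and $D$ remain inside $J\cap R$. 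After the cancellation the only surviving top contribution is an integer multiple of a $\sigma$-power of $a_n$; applying $\sigma$-stability to untwist, and right multiplication by a power of $x$ to restore the exponent $n$, yields the required monomial $m a_n x^n\in J$.

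The main obstacle is precisely the presence of $D$: unlike the pure automorphism case $R[x;\sigma]$, where the ring is graded and every homogeneous component of a radical element is again in the radical (so one could simply take $m=1$), here left multiplication by $x$ is not degree-homogeneous and the leading coefficient $a_n$ leaks, via $D$, into every lower degree. The heart of the proof is therefore the bookkeeping that cancels all of these $D$-generated tails at once, and it is exactly this cancellation that forces the appearance of the integer factor $m$ (morally an $n!$- or binomial-type coefficient coming from the finite difference). I expect this combinatorial cancellation to be the technically delicate point; note also that $m$ need not be a unit in positive characteristic, which is consistent with the fact that Lemma~\ref{l5} is applied in Theorem~\ref{t1} only over a field of characteristic $0$, where $m$ becomes invertible and one recovers $a_n x^n\in J$ outright.
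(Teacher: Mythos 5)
Your plan has the right target (isolate $ma_nx^n$ by cancelling the lower-degree tail of $f$), but the step that would do all the work is never carried out, and the tools you invoke to control it do not apply. Concretely: (i) Lemmas \ref{l1} and \ref{l2} are proved only under the hypothesis that $D$ is $q$-skew, which is not assumed in Lemma \ref{l5}; more importantly, they assert that $J(R[x;\sigma,D])\cap R$ is $\sigma$- and $D$-stable, whereas the coefficients $a_i$ of $f$ are not known to lie in $J(R[x;\sigma,D])\cap R$ --- that is essentially what one is trying to prove --- so the claim that ``all the coefficients produced remain inside $J\cap R$'' is circular. (ii) The proposed ``$n$-fold finite difference in the left-multiplication operator'' is precisely the delicate point, and in the Ore setting there is no evident candidate: the elements $x^kfx^{\ell}$ do all lie in $J$, but their leading coefficients are the pairwise different elements $\sigma^k(a_n)$, and the substitution $x\mapsto x+1$ (which is what produces the binomial/factorial integer factors in Amitsur's argument for $R[x]$) is not an automorphism of $R[x;\sigma,D]$ --- it is an isomorphism onto $R[x;\sigma,D+(1-\sigma)]$ --- so no integer linear combination of the $x^kfx^{\ell}$ is exhibited, or obviously exists, that kills every term below the top degree. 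Announcing that the bookkeeping ``cancels all of the $D$-generated tails at once'' states the goal rather than proving it.

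For contrast, the paper's proof avoids this inductive cancellation entirely. It extends scalars to $S=R\otimes_{\mathbb{Z}}\mathbb{Z}[\zeta]$ (characteristic $0$) or $S=R\otimes_{\mathbb{Z}_p}K$ (characteristic $p$), so that the coefficient ring contains $n+1$ distinct units $\beta_i$ fixed by $\sigma'$ and annihilated by $D'$; the maps $\lambda_{\beta_i}\colon\sum s_jx^j\mapsto\sum s_j\beta_i^jx^j$ preserve $J(S[x;\sigma',D'])$, and a Vandermonde-determinant linear combination of the $\lambda_{\beta_i}(f)$ isolates $d\,a_nx^n$ with $0\neq d\in\mathbb{Z}[\zeta]$; multiplying by the Galois conjugates of $d$ produces the integer $m$, and Lemmas \ref{l3} and \ref{l4} (normalizing basis and direct-summand criteria) transfer membership in the Jacobson radical between $R[x;\sigma,D]$ and $S[x;\sigma',D']$. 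If you wish to salvage your inductive scheme, you must actually exhibit the cancelling combination and justify why its ingredients stay in $J$; as written, the proposal is a sketch whose central step is missing.
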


\begin{proof}
    We will show that the first statement holds. The second statement can be shown similarly.
    
    Assume that $R$ has identity. Let $f(x)=a_nx^n+\dots+a_1x+a_0$ be a nonzero polynomial in $J(R[x;\sigma, D])$. If $n=0$, it is trivial. Suppose that $n\geq 1$. We will consider two cases: 1. characteristic of $F$ is $0$, 2. characteristic of $F$ is a prime $p\neq 0$.

    Case 1. Let $\zeta$ be a complex primitive $(n+1)$-th root of unity and $S=R\otimes_\mathbb{Z} \mathbb{Z}[\zeta]$. Then, $R$ and $\mathbb{Z}[\zeta]$ are contained in $S$ \cite[Chapter II, $\S$3, no.7, Corollary 1]{bo}. 
    Define $\sigma':S\rightarrow S$ by $\sigma'(r\otimes m)=\sigma(r)\otimes m$. We can check that $\sigma'$ is an automorphism on $S$. Define $D':S\rightarrow S$ by $D'(r\otimes m)=D(r)\otimes m$. We can check that $D'$ is a $\sigma'$-derivation on $R$. 
    Now, $R[x;\sigma, D]\subseteq S[x;\sigma',D']$ and $S[x;\sigma',D']$ has a normalizing basis over $R[x;\sigma, D]$. So, $f(x)\in J(S[x;\sigma',D'])$ by Lemma \ref{l3}. 
    
    Let $\beta_1, \beta_2, \dots, \beta_{n+1}$ be $n+1$ distinct units of $\mathbb{Z}[\zeta]$. Define maps $\lambda_{\beta_i}:S[x;\sigma', D']\rightarrow S[x;\sigma', D']$ by $\lambda_{\beta_i}(\sum_{j=0}^ks_jx^j)=\sum_{j=0}^ks_j\beta_i^jx^j$ for any $s_j\in S$. 
    Since $\sigma'(\beta_i)=\beta_i$ and $D'(\beta_i)=0$, they are well-defined automorphisms of $S[x;\sigma', D']$. 
    Thus, $\lambda_{\beta_i}(f(x))\in J(S[x;\sigma',D'])$. We obtain $da_nx^n\in J(S[x;\sigma',D'])$ where nonzero $d\in \mathbb{Z}[\zeta]$ is the value of the Vandermonde determinant.
    Let $d=d_1, d_2, \dots, d_t$ be the conjugates of $d$ in the Galois extension $\mathbb{Q}[\zeta]$ of $\mathbb{Q}$. Then $m:=d_1d_2\dots d_t\in \mathbb{Q}$. Since $d=d_1\in \mathbb{Z}[\zeta]$, all $d_i\in \mathbb{Z}[\zeta]$ for $i=1, 2, \dots, t$. So, $m\in \mathbb{Z}[\zeta]\cap \mathbb{Q}=\mathbb{Z}$.
    Thus, $ma_nx^n\in J(S[x;\sigma',D'])\cap R[x;\sigma,D]$. But, $R[x;\sigma, D]$ is a direct summand of $S[x;\sigma', D']$ as $R[x;\sigma, D]$-module. So, $J(S[x;\sigma',D'])\cap R[x;\sigma,D]\subseteq J(R[x;\sigma, D])$ by Lemma \ref{l4}. Hence, $ma_nx^n\in J(R[x;\sigma, D])$.

    Case 2. Consider the extension $S=R\otimes_{\mathbb{Z}_p}K$ where $K$ is a finite field containing $\mathbb{Z}_p$ and $|K|>n+1$. Similar to Case 1, we obtain that $da_nx^n\in J(S[x;\sigma', D'])$ for some nonzero $d\in K$. Thus, $a_nx^n\in J(R[x;\sigma, D])$.
\end{proof}

\begin{Corollary}\label{c1}
    Let $R$ be an algebra over a field $F$.
    Let $\sigma$ and $D$ be an automorphism and $\sigma$-derivation on $R$.
    Every polynomial of minimal degree in $J(R[x;\sigma,D])$ is a monomial. 
\end{Corollary}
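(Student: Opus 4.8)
The plan is to obtain the corollary as a short consequence of Lemma \ref{l5}(1) via a degree-reduction argument driven by minimality. First I would take a nonzero polynomial $f(x)=a_nx^n+\dots+a_1x+a_0$ of minimal degree $n$ among the elements of $J(R[x;\sigma,D])$, and apply part (1) of Lemma \ref{l5}. This yields a monomial $ma_nx^n\in J(R[x;\sigma,D])$ with $m\in\mathbb{Z}$. Tracing the proof of Lemma \ref{l5}, I would record that $ma_n\neq 0$: in characteristic $0$ the integer $m$ is the (nonzero) norm of the Vandermonde determinant, and in characteristic $p$ one actually obtains the bare monomial $a_nx^n\in J(R[x;\sigma,D])$, so $m$ may be taken to be $1$.

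Next, since $J(R[x;\sigma,D])$ is an ideal, it is closed under multiplication by the integer $m$ and under subtraction, so the element $mf(x)-ma_nx^n$ again lies in $J(R[x;\sigma,D])$. This element equals $ma_{n-1}x^{n-1}+\dots+ma_1x+ma_0$, a polynomial of degree strictly less than $n$. By the minimality of $n$ it must be the zero polynomial, which forces $ma_i=0$ for every $0\le i\le n-1$. Finally I would pass from $ma_i=0$ to $a_i=0$: in characteristic $p$ this is immediate because $m=1$, while in characteristic $0$ the algebra $R$ is a $\mathbb{Q}$-vector space and hence torsion-free as an abelian group, so $m\neq 0$ together with $ma_i=0$ gives $a_i=0$. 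In either case $f(x)=a_nx^n$ is a monomial.

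There is essentially no deep obstacle here, as the corollary is a bookkeeping consequence of Lemma \ref{l5}; the only points requiring care are the two just used. The first is verifying that the integer $m$ produced by Lemma \ref{l5} is nonzero (equivalently, that $ma_n\neq 0$), which is why I would inspect the proof of Lemma \ref{l5} rather than merely quote its statement. The second is the torsion argument distinguishing the two characteristics, which is needed to cancel $m$ and conclude that the lower-degree coefficients genuinely vanish; this is exactly where the hypothesis that $R$ is an algebra over a field enters.
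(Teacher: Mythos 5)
Your proposal is correct and follows essentially the same argument as the paper: apply Lemma \ref{l5} to get $ma_nx^n\in J(R[x;\sigma,D])$, subtract to obtain a lower-degree element that must vanish by minimality, and then cancel $m$ using $m=1$ in characteristic $p$ and torsion-freeness in characteristic $0$. Your version merely makes explicit the nonvanishing of $m$ and the torsion argument, which the paper leaves implicit.
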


\begin{proof}
    Let $f(x)=a_nx^n+\dots+a_1x+a_0$ be a polynomial in $J(R[x;\sigma, D])$ with minimal degree.
    By Lemma $\ref{l5}$, we know that $ma_nx^n\in J(R[x;\sigma,D])$ for some $m\in \mathbb{Z}$. Then $mf(x)-ma_nx^n\in J(R[x;\sigma,D])$. However,
    \begin{align*}
        mf(x)-max^nx^n=ma_{n-1}x^{n-1}+\dots+ma_1x+ma_0.
    \end{align*}
    By the minimality of degree, $mf(x)-ma_nx^n=0$ and so $mf(x)=ma_nx^n$. If characteristic of $F$ is zero, then $f(x)=a_nx^n$. If characteristic of $F$ is a prime $p\neq 0$, then $m=1$ by the proof of Lemma \ref{l5} and so $f(x)=a_nx^n$.

    Similarly, if $f(x)=x^na_n+\dots+xa_1+a_0$ has the minimal degree in $J(R[x;\sigma, D])$, then we can get that $f(x)=x^na_n\in J(R[x;\sigma,D])$.
\end{proof}

Let $R$ be an algebra over a field $F$ of characteristic $0$.
Suppose that $D$ is $q$-skew. Let $k$ be any non-negative integer. Let $A=\{r\in R: rx^k\in J(R[x;\sigma, D])\}$. First, $0\in A$. 
Since $J(R[x;\sigma, D])$ is an ideal of $R[x;\sigma, D]$, we have that for any $a,b\in A$, $(a+b)x^k=ax^k+bx^k\in J(R[x;\sigma, D])$. So, $A$ is closed under addition. Since $rax^k\in J(R[x;\sigma, D])$ for any $a\in A$ and $r\in R$, we have that $A$ is closed under left multiplication, so $A$ is a left ideal of $R$. 
Now, we will show that $A$ is a right ideal. Take any $a\in A$ and $r\in R$. Since $\sigma$ is an automorphism on $R$, there is an element $t$ in $R$ such that $\sigma^k(t)=r$. Consider $ax^kt$. Since $J(R[x;\sigma, D])$ is an ideal of $R[x;\sigma, D]$, $ax^kt\in J(R[x;\sigma, D])$.
\begin{align*}
    ax^kt=a\sigma^k(t)x^k+\sum_{i=0}^{k-1}\binom{k}{i}_q\sigma^iD^{k-i}(t)x^i.
\end{align*}
By Lemma \ref{l5}, we know that $ma\sigma^k(t)x^k\in J(R[x;\sigma, D])$ for some $m\in \mathbb{Z}$. Since $F$ is of characteristic $0$, we obtain that $a\sigma^k(t)x^k\in J(R[x;\sigma, D])$. Since $arx^k=a\sigma^k(t)x^k$, we obtain that $ar\in A$. Therefore, $A$ is an ideal of $R$.

\begin{Lemma}\label{l6}
    Let $R$ be an algebra over a field $F$ of characteristic $0$.
    Let $\sigma$ and $D$ be an automorphism and a $q$-skew $\sigma$-derivation on $R$. If $J(R[x;\sigma,D])\neq 0$, then 
    $I:=\{r\in R: rx\in J(R[x;\sigma,D])\}\neq 0$.
    
\end{Lemma}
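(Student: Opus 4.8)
The plan is to produce a nonzero element of $J(R[x;\sigma,D])$ of degree at most $1$, which automatically gives a nonzero element of $I$. Since $J(R[x;\sigma,D])\neq 0$, Corollary \ref{c1} furnishes a minimal-degree element, and it is a monomial $ax^n$ with $a\neq 0$. If $n=1$ then $a\in I$ by definition. If $n=0$ then $a\in J(R[x;\sigma,D])\cap R$, and since $J(R[x;\sigma,D])$ is a right ideal we get $ax\in J(R[x;\sigma,D])$, so again $0\neq a\in I$. Hence the whole problem reduces to proving that the minimal degree $n$ satisfies $n\leq 1$, and I would argue by contradiction, assuming $n\geq 2$.

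First I would record the relations forced by minimality. As in the paragraph preceding the lemma, $A:=\{r\in R:rx^n\in J(R[x;\sigma,D])\}$ is a nonzero ideal of $R$ containing $a$, and (by the arguments of Lemmas \ref{l1} and \ref{l2}) it is stable under $\sigma$ and $D$. Multiplying $ax^n$ on the right by an arbitrary $b\in R$ and subtracting the top term, which is legitimate because $a\sigma^n(b)\in A$, the element
\[
ax^nb-a\sigma^n(b)x^n=\sum_{i=0}^{n-1}\binom{n}{i}_q\,a\,\sigma^iD^{n-i}(b)\,x^i
\]
lies in $J(R[x;\sigma,D])$ and has degree at most $n-1$, so minimality forces it to vanish. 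This yields $\binom{n}{i}_q\,a\,\sigma^iD^{n-i}(b)=0$ for all $b\in R$ and $0\leq i\leq n-1$; in particular $aD^n(R)=0$.

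The crux, and the step I expect to be the main obstacle, is a \emph{descent}: to manufacture from $ax^n$ a nonzero element of $J(R[x;\sigma,D])$ of strictly smaller degree, contradicting $n\geq 2$. The difficulty is that the obvious tools stall. When $q$ is a root of unity the intermediate coefficients $\binom{n}{i}_q$ for $1\leq i\leq n-1$ can vanish (already $\binom{2}{1}_q=0$ when $q=-1$), so the relations above are largely vacuous; and expanding the quasi-regularity identity for $ax^n$ only produces terms in degrees that are multiples of $n$, never in degree $n-1$. This is exactly the place where the case $D=0$ would invoke homogeneity of the radical of the graded ring $R[x;\sigma]$, a tool destroyed once $D\neq 0$. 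To replace it I would use Proposition \ref{p1}: if $q$ is a primitive $m$-th root of unity then $\binom{m}{j}_q=0$ for $0<j<m$, so $x^m r=\sigma^m(r)x^m+D^m(r)$ and the subring generated by $x^m$ is an honest Ore extension $R[x^m;\sigma^m,D^m]$ in which the derivation $D^m$ commutes with $\sigma^m$. I would transport $ax^n$ and the relation $aD^n(R)=0$ into this more tractable extension, carry out the descent there (reducing degree by a factor of $m$ and inducting), and pull the resulting lower-degree radical element back; when $q$ is not a root of unity every $\binom{n}{i}_q\neq 0$, the relations give $a\,\sigma^iD^{n-i}(R)=0$ for all $i<n$, and I would run the descent directly using the characteristic $0$ coefficient-isolation of Lemma \ref{l5}. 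The delicate point throughout is the comparison of radicals between $R[x;\sigma,D]$ and its subextension, for which I would lean on the normalizing-basis machinery of Lemmas \ref{l3} and \ref{l4}.
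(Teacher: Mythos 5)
Your setup matches the paper's: reduce to a minimal-degree monomial $ax^n$, dispose of $n\le 1$, and for $n\ge 2$ multiply $ax^n$ on the right by arbitrary $b\in R$ and use Corollary \ref{c1} to kill the lower-order coefficients, obtaining $\binom{n}{i}_q\,a\,\sigma^iD^{n-i}(b)=0$. But at that point your proof stops being a proof: the ``descent'' that you yourself identify as the crux is only announced, not carried out. The paper does not descend in degree at all. It uses only the $i=n-1$ relation to get $a\,\sigma^{n-1}D(b)=0$, hence (via $D\sigma=q\sigma D$ and surjectivity of $\sigma$) $aD(R)=0$; this makes $a$ behave as in the pure automorphism case, $ax^kr=a\sigma^k(r)x^k$ for all $k,r$, and then the Bedi--Ram nilpotency trick applies: with $A=\{r:rx^n\in J(R[x;\sigma,D])\}$ one checks $(axR[x;\sigma,D])^n\subseteq\{cx^n:c\in A\}\cdot R[x;\sigma,D]\subseteq J(R[x;\sigma,D])$, so $axR[x;\sigma,D]$ is nilpotent modulo the radical and therefore $ax\in J(R[x;\sigma,D])$, contradicting minimality. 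This nilpotency argument is the missing idea in your writeup.

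The specific replacement you sketch also runs into concrete obstacles. Passing to the subring generated by $x^m$ (for $q$ a primitive $m$-th root of unity) does not mesh with Lemmas \ref{l3} and \ref{l4}: $\{1,x,\dots,x^{m-1}\}$ is \emph{not} a normalizing basis of $R[x;\sigma,D]$ over $R[x^m;\sigma^m,D^m]$, because $x\beta=\sigma(\beta)x+D(\beta)$ is not of the form $\phi(\beta)x$; and the degree $n$ of your minimal monomial need not be a multiple of $m$, so ``reducing degree by a factor of $m$ and inducting'' has no clear meaning for $ax^n$. On the other hand, your worry that $\binom{n}{n-1}_q=q^{n-1}+\dots+q+1$ may vanish when $q$ is a root of unity is legitimate, and it is a point the paper's own proof passes over silently (the step ``since $\binom{n}{n-1}_q$ is in the field $F$, $a\sigma^{n-1}D(r)=0$'' needs $\binom{n}{n-1}_q\neq 0$). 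So you have correctly located a delicate point, but you have neither closed it nor supplied the argument that actually finishes the lemma.
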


\begin{proof}
    Since $J(R[x;\sigma, D])\neq 0$, let $f(x)=a_nx^n+\dots+a_1x+a_0$ be a nonzero polynomial of minimal degree $n$ in $J(R[x;\sigma, D])$. If $n=0$, then trivially $a_0x\in J(R[x;\sigma, D])$. If $n=1$, then $a_1x\in J(R[x;\sigma,D])$ by Corollary \ref{c1}. 
    Assume that $n\geq 2$. By Corollary \ref{c1}, $f(x)=a_nx^n\in J(R[x;\sigma, D])$. We claim that $a_nx\in J(R[x;\sigma, D])$ which is a contradiction to the minimality of degree in $J(R[x;\sigma, D])$.
    Take any element $r\in R$.
    Then $f(x)r=a_nx^nr\in J(R[x;\sigma, D])$. Since $D$ is $q$-skew, by Proposition \ref{p1}
    \begin{align}\label{e0}
        f(x)r=a_nx^nr=\sum_{i=0}^n\binom{n}{i}_qa_n\sigma^iD^{n-i}(r)x^i.
    \end{align}
    Since its degree is $n$, Corollary \ref{c1} gives us that $f(x)r=a_n\sigma^n(r)x^n$. 
    So, the term of degree $n-1$ has the coefficient $\binom{n}{n-1}_qa_n\sigma^{n-1}D(r)=0$. 

    Since $\binom{n}{n-1}_q=\frac{q^n-1}{q-1}=q^{n-1}+\dots+q+1$ is in the field $F$ , $a_n\sigma^{n-1}D(r)=0$.
    Since $D\sigma=q\sigma D$, we obtain that $a_nD\sigma^{n-1}(r)=q^{n-1}a_n\sigma^{n-1}D(r)=0$.
    Since we took $r$ arbitrarily and $\sigma$ is an automorphism, we obtain that $a_nD(R)=0$. Thus, for any integer $k$ and element $r\in R$,
    \begin{align*}
        a_nx^kr=\sum_{i=0}^k\binom{k}{i}_qa_n\sigma^iD^{k-i}(r)x^i=\sum_{i=0}^k\binom{k}{i}_q q^{-i(k-i)}a_nD^{k-i}\sigma^i(r)x^i=a_n\sigma^k(r)x^k.
    \end{align*}
    Let $A=\{r\in R: rx^n\in J(R[x;\sigma, D])\}$. We know that $A$ is an ideal of $R$ and $a_n\in A$.
    So, we can check that $(a_nxR[x;\sigma,D])^n\subseteq A_nR[x;\sigma,D]$ where $A_n=\{ax^n : a\in A\}$.
    Indeed, for any non-negative integers $k_1, k_2$ and elements $b_1, b_2\in R$,
    \begin{align*}
        (a_nxb_1x^{k_1})(a_nxb_2x^{k_2})&=(a_nxb_1x^{k_1})a_n\sigma(b_2)x^{k_2+1}\\
        &=a_nxb_1\sum_{i=0}^{k_1}\binom{k_1}{i}_q\sigma^iD^{k-i}(a_n\sigma(b_2))x^{k_2+1+i}\\
        &=a_n\sum_{i=0}^{k_1}\binom{k_1}{i}_q\sigma(b_1\sigma^iD^{k-i}(a_n\sigma(b_2)))x^{k_2+2+i}
    \end{align*}
    Since $A_n\subseteq J(R[x;\sigma,D])$, hence $a_nx\in J(R[x;\sigma,D])$.
\end{proof}

Let $R$ be an algebra over a field $F$ of characteristic $0$. Let $\sigma$ and $D$ be an automorphism and a $q$-skew $\sigma$-derivation on a ring $R$.
We know that $I$ in Lemma \ref{l6} is an ideal of $R$. Now, we claim that $I$ is $(\sigma, D)$-stable.

First, we will show that $I$ is $\sigma$-stable. 
Define a map $\alpha: R[x;\sigma,D] \rightarrow R[x;\sigma, D]$ by $\alpha(\sum_{i=0}^{n}a_ix^i)=\sum_{i=0}^nq^{-i}\sigma(a_i)x^i$ for any $a_i\in R$. Since $\sigma$ is an automorphism, $\alpha$ is a well-defined automorphism. We know that $\alpha(J(R[x;\sigma,D]))\subseteq J(R[x;\sigma,D])$. Thus, for any $r\in I$, $q^{-1}\sigma(r)x=\alpha(rx)\in J(R[x;\sigma,D])$ and so $q^{-1}\sigma(r)\in I$. Since $I$ is an ideal of $R$, thus $\sigma(r)=q(q^{-1}\sigma(r))\in I$ and $I$ is $\sigma$-stable. Moreover, since $\sigma$ is an automorphism and so $D\sigma^{-1}=q^{-1}\sigma^{-1}D$, we can see that $I$ is $\sigma^{-1}$-stable. That means that $I$ is $\sigma$-invariant.

Now, let's show that $I$ is $D$-stable. Take any $r\in I$. Since $I$ is $\sigma$-stable, we know that $\sigma(r)x\in J(R[x;\sigma,D])$. Then
$xr-D(r)=\sigma(r)x\in J(R[x;\sigma,D])$. By Lemma \ref{l5}, $x(mr)\in J(R[x;\sigma,D])$ for some $m\in \mathbb{Z}$. Since $F$ is of characteristic $0$, $xr\in J(R[x;\sigma,D])$. Thus, $D(r)=xr-\sigma(r)x\in J(R[x;\sigma,D])$ and $I$ is $D$-stable.

\begin{proof}[Proof of Theorem \ref{t1}]
    We assume that $R$ has identity.
    First, we will show that $I\cap J(R) + I_0\subseteq J(R[x;\sigma,D])$. Since $J(R[x;\sigma,D])$ is an ideal of $R[x;\sigma,D]$, it is clear that $I_0$ is a subset of $J(R[x;\sigma,D])$. 
    Let $r\in I\cap J(R)$. 
    Take any polynomial $x^nr_n+\dots+xr_1+r_0\in R[x;\sigma,D]$. Let $\beta=r(x^nr_n+\dots+xr_1+r_0)$. Then $\beta=f(x)+rr_0$ where $f(x)\in rxR[x;\sigma,D]$. Since $r\in I$, so $rx\in J(R[x;\sigma,D])$ and $f(x)\in J(R[x;\sigma,D])$. 
    Since $r\in J(R)$, so $rr_0$ has the quasi-inverse $b$ and we obtain that
    \begin{align*}
        \beta b=f(x)b+rr_0b=f(x)b-(rr_0+b)=f(x)b-(\beta-f(x)+b).
    \end{align*}
     We know that $\beta+b+\beta b=f(x)b+f(x)\in J(R[x;\sigma,D])$. There is $c\in J(R[x;\sigma,D])$ such that $\beta+b+\beta b+c+(\beta+b+\beta b)c=0$. So, $\beta+(b+c+bc)+\beta(b+c+bc)=0$ and thus $\beta$ is quasi-regular and so $rR[x;\sigma,D]$ is a quasi-regular right ideal. Therefore, $r\in J(R[x;\sigma,D])$ and $I\cap J(R) \subseteq J(R[x;\sigma,D])$.

     Now, let's show that $J(R[x;\sigma,D])\subseteq I\cap J(R) + I_0$. We know that $I$ is an ideal of $R$. 
     Define a map $\bar{\sigma}:R/I\rightarrow R/I$ by $\bar{\sigma}(r+I)=\sigma(r)+I$ for any $r\in R$. Since $I$ is $\sigma$-invariant and $\sigma$ is an automorphism of $R$, $\bar{\sigma}$ is a well-defined automorphism on $R/I$. 
     Define a map $\bar{D}:R/I\rightarrow R/I$ by $\bar{D}(r+I)=D(r)+I$ for any $r\in R$. Since $I$ is $D$-stable, $\bar{D}$ is well-defined. Since $D$ is a $\sigma$-derivation on $R$, $\bar{D}$ is a $\bar{\sigma}$-derivation on $R/I$. Consider $(R/I)[x;\bar{\sigma},\bar{D}]$. 
     We define another map $\theta:R[x;\sigma, D]\rightarrow (R/I)[x;\bar{\sigma},\bar{D}]$ by $\theta(\sum_{i=0}^na_ix^i)=\sum_{i=0}^n(a_i+I)x^i$ for $a_i\in R$. We can check that $\theta$ is a well-defined surjective homomorphism. 
     Since the kernel of $\theta$ is $I[x;\sigma, D]$, we have that $R[x;\sigma,D]/I[x;\sigma,D]\cong (R/I)[x;\bar{\sigma},\bar{D}]$. 
     We claim that $J((R/I)[x;\bar{\sigma},\bar{D}])=0$. Let $(r+I)x\in J((R/I)[x;\bar{\sigma},\bar{D}])$. We can see that
     \begin{align*}
         \frac{R[x;\sigma,D]rx+I[x;\sigma,D]}{I[x;\sigma,D]} \hspace{0.2cm}\text{is a quasi-regular left ideal of}\hspace{0.2cm}\frac{R[x;\sigma,D]}{I[x;\sigma,D]}
     \end{align*}
     and
      \begin{align*}
         \frac{R[x;\sigma,D]rx+I[x;\sigma,D]}{I[x;\sigma,D]} \hspace{0.2cm}\text{is isomorphic to}\hspace{0.2cm}\frac{R[x;\sigma,D]rx+I_0}{I_0}.
     \end{align*}
     Since $I_0\subseteq J(R[x;\sigma,D])$, so $R[x;\sigma,D]rx+I_0\subseteq J(R[x;\sigma,D])$ and $R[x;\sigma,D]rx\subseteq J(R[x;\sigma,D])$. Thus, $rx\in J(R[x;\sigma,D])$ and $r\in I$. Hence, $J((R/I)[x;\bar{\sigma},\bar{D}])=0$.
     Since $J(R[x;\sigma,D]/I[x;\sigma,D])\cong J((R/I)[x;\bar{\sigma},\bar{D}])$, we obtain that $J(R[x;\sigma,D])\subseteq I[x;\sigma,D]$.
     It remains to show that a constant term of every polynomial in $J(R[x;\sigma,D])$ is in $J(R)$. Let $g(x)=b_mx^m+\dots+b_1x+b_0$ be a polynomial in $J(R[x;\sigma,D])$. Since $g(x)\in J(R[x;\sigma,D])\subseteq I[x;\sigma,D]$, every coefficient $b_i$ is contained in $I$. Thus, $b_1x, b_2x^2, \dots, b_mx^m\in J(R[x;\sigma,D])$. So, $b_0=g(x)-(b_1x+b_2x+\dots+b_mx^m)\in J(R[x;\sigma,D])$. Hence, $b_0\in J(R[x;\sigma,D])\cap R\subseteq J(R)$.
\end{proof}

\section{Ore extensions with locally torsion automorphisms}
Since the multiplication on Ore extensions $R[x;\sigma,D]$ is defined by $xa=\sigma(a)x+D(a)$ for all $a\in R$, we can find the following rules.

\begin{Lemma}\cite[Lemma 5.1]{P2}\label{l7}
    Let $R$ be a ring, $\sigma$ be an endomorphism of $R$, and $D$ be a $\sigma$-derivation on $R$.
    Then, in $R[x;\sigma,D]$, the following equation holds:
    \begin{align*}
        x^na=\sum_{m=0}^n\left(\sum_{\substack{deg_\sigma(w)=m\\deg_d(w)=n-m}}w(a) \right)x^m.
    \end{align*}
    where $w(a)=w(\sigma,D)(a)$ is a monomial in $\sigma$ and $d$ at $a$, and $deg_\sigma(w)$ and $deg_D(w)$ denote respectively degrees of $\sigma$ and $D$ in $w$.
\end{Lemma}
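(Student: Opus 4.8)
The plan is to argue by induction on $n$, using only the defining relation $xa=\sigma(a)x+D(a)$ together with a bookkeeping argument on words in the two ``letters'' $\sigma$ and $D$. Throughout, a \emph{monomial} $w$ of length $k$ means an ordered word $\ell_1\ell_2\cdots\ell_k$ with each $\ell_j\in\{\sigma,D\}$, acting on $R$ as the composite operator $w(a)=\ell_1(\ell_2(\cdots\ell_k(a)))$; here $\deg_\sigma(w)$ and $\deg_D(w)$ count the occurrences of $\sigma$ and of $D$, so $\deg_\sigma(w)+\deg_D(w)=k$, and two distinct words yielding the same operator are counted separately. The base case $n=0$ is immediate, since the only word of length $0$ is the empty word with $w(a)=a$, so both sides equal $a$; the case $n=1$ recovers exactly $xa=\sigma(a)x+D(a)$, with the word $\sigma$ contributing $\sigma(a)x$ and the word $D$ contributing $D(a)$.

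For the inductive step I would assume the formula for $n$ and compute $x^{n+1}a=x(x^na)$ by substituting the inductive hypothesis and then moving the leading $x$ past each coefficient via $xc=\sigma(c)x+D(c)$:
\begin{align*}
    x^{n+1}a
    =\sum_{m=0}^n\left(\sigma\!\left(\sum_{\substack{\deg_\sigma(w)=m\\ \deg_D(w)=n-m}}w(a)\right)x
    +D\!\left(\sum_{\substack{\deg_\sigma(w)=m\\ \deg_D(w)=n-m}}w(a)\right)\right)x^m.
\end{align*}
For each word $w$ of length $n$, applying $\sigma$ produces the length-$(n+1)$ word $\sigma w$ (raising the $\sigma$-degree by one), placed in front of $x^{m+1}$, while applying $D$ produces $Dw$ (raising the $D$-degree by one), placed in front of $x^m$. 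Hence every contribution to the coefficient of $x^{m'}$ in $x^{n+1}a$ is a length-$(n+1)$ word whose first letter is either $\sigma$, in which case it equals $\sigma w$ with $\deg_\sigma(w)=m'-1$, or $D$, in which case it equals $Dw$ with $\deg_\sigma(w)=m'$.

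The crux is the resulting bijection: every length-$(n+1)$ word in $\{\sigma,D\}$ factors uniquely as $\ell\cdot w$ according to its first letter $\ell$, with $w$ of length $n$. Reading this factorization degree by degree shows that the words appearing in the coefficient of $x^{m'}$ above are precisely all words $w'$ with $\deg_\sigma(w')=m'$ and $\deg_D(w')=n+1-m'$, each occurring exactly once: the $\sigma$-first words account for those with $m'\ge 1$ and the $D$-first words for those with $m'\le n$, so the boundary cases $m'=0$ (only the all-$D$ word) and $m'=n+1$ (only the all-$\sigma$ word) are covered automatically. Collecting terms then yields the claimed formula at level $n+1$, completing the induction.

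I expect the only delicate point to be this final bookkeeping step: one must verify that the first-letter decomposition produces each target word exactly once and with the correct pair of degrees, and check the extreme indices $m'=0,n+1$ where only one of the two cases contributes. Everything else is a direct and routine application of the Ore relation, so the substance of the argument lies entirely in matching the word combinatorics to the two terms $\sigma(\cdot)x$ and $D(\cdot)$ produced at each stage.
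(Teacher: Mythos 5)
Your induction is correct: the base cases are right, the additivity of $\sigma$ and $D$ justifies pushing them inside the inner sums, and the unique first-letter factorization of each length-$(n+1)$ word in $\{\sigma,D\}$ gives exactly the required coefficient of $x^{m'}$, including the boundary cases $m'=0$ and $m'=n+1$. Note that the paper itself offers no proof of this lemma --- it is quoted from Greenfeld--Smoktunowicz--Ziembowski \cite[Lemma 5.1]{P2} --- and your argument is the standard one used there, so there is nothing to compare beyond confirming that your bookkeeping is sound.
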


In 2011, Bergen and  Grzeszczuk considered the skew power series ring $R[[x;\sigma,D]]$ as all infinite sums in form of $a_0+a_1x+a_2x^2+\dots$ for all $a_0, a_1, a_2, \dots$ \cite{bg}. The skew power series ring is closed under standard addition. However, the multiplication given by $xa=\sigma(a)x+D(a)$ is not necessarily well-defined. In 2019, Greenfeld, Smoktunowicz, and Ziembowski proved that if $D$ is locally nilpotent, the natural multiplication from $R[x;\sigma,D]$ to $R[[x;\sigma,D]]$ is well-defined \cite{P2}.

\begin{Lemma}\cite[Theorem 5.3]{P2}\label{l8}
    Let $R$ be a ring, $\sigma$ be an endomorphism, and $D$ be a locally nilpotent $\sigma$-derivation. Then natural multiplication from $R[x;\sigma,D]$ to $R[[x;\sigma,D]]$ is well-defined. 
\end{Lemma}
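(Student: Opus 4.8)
The plan is to reduce well-definedness to a finiteness statement about the coefficients of a product and then to secure that finiteness from local nilpotence. Fixing two power series $f=\sum_{i\ge 0}a_ix^i$ and $g=\sum_{j\ge 0}b_jx^j$, I would write $fg=\sum_{i,j}a_ix^ib_jx^j$ and expand each $x^ib_j$ by Lemma \ref{l7} as $x^ib_j=\sum_{m=0}^{i}c_{i,m}(b_j)x^m$, where $c_{i,m}(b)=\sum_{\substack{deg_\sigma(w)=m\\deg_D(w)=i-m}}w(b)$ is a finite sum of monomials in $\sigma$ and $D$. Multiplying on the right by $x^j$ and collecting terms, the coefficient of $x^k$ in $fg$ becomes
\begin{align*}
\sum_{j=0}^{k}\ \sum_{i\ge k-j}a_i\,c_{i,k-j}(b_j).
\end{align*}
Since each $c_{i,m}(b_j)$ is already a finite element of $R$, the only threat to well-definedness is the inner sum over the infinitely many $i\ge k-j$. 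Thus everything reduces to the claim that, for a fixed element $b$ and a fixed $\sigma$-degree $m$, the quantity $c_{i,m}(b)$ vanishes once $i$ is large; equivalently, $w(b)=0$ for every monomial $w$ with $deg_\sigma(w)=m$ and $deg_D(w)$ sufficiently large.

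I would prove this claim by induction on $m$, showing that for each $b\in R$ and each $m\ge 0$ there is an integer $L=L(b,m)$ with $w(b)=0$ whenever $deg_\sigma(w)=m$ and $deg_D(w)\ge L$. For $m=0$ a monomial is just $D^{\ell}$, so local nilpotence gives $L=N_b$, where $D^{N_b}(b)=0$. For the inductive step I would peel off the rightmost $\sigma$: any monomial with $m$ occurrences of $\sigma$ factors as $w=w'\sigma D^{e}$, where $w'$ has $m-1$ occurrences of $\sigma$ and $e\ge 0$ is the length of the trailing block of $D$'s, so that $w(b)=w'\big(\sigma(D^{e}(b))\big)$. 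If $e\ge N_b$ then $D^{e}(b)=0$ and $w(b)=0$; otherwise $\sigma(D^{e}(b))$ lies in the finite set $\{\sigma(b),\sigma(D(b)),\dots,\sigma(D^{N_b-1}(b))\}$. Applying the induction hypothesis to each of these finitely many elements furnishes a common bound $L'$ past which every monomial with $m-1$ occurrences of $\sigma$ annihilates all of them; then $L:=N_b+L'$ works, since $deg_D(w)\ge L$ forces either $e\ge N_b$, or $e<N_b$ and hence $deg_D(w')=deg_D(w)-e\ge L'$.

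Granting the claim, the proof concludes quickly. For fixed $k$ and each $j\in\{0,\dots,k\}$, setting $m=k-j$ and taking $L=L(b_j,m)$, the claim gives $c_{i,m}(b_j)=0$ as soon as $deg_D(w)=i-m\ge L$, i.e.\ for all $i\ge m+L$; hence only finitely many $i$ contribute and the coefficient of $x^k$ is a finite sum of elements of $R$, so a well-defined element of $R$. As $k$ is arbitrary, $fg\in R[[x;\sigma,D]]$ and the natural multiplication is well-defined.

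The hard part will be the inductive claim, and the essential difficulty is that $\sigma$ and $D$ need not commute: a monomial of large $D$-degree is an arbitrary interleaving of $\sigma$'s and $D$'s rather than $\sigma^{m}D^{\ell}$, so local nilpotence cannot be invoked on $b$ directly. The mechanism that resolves this is the factorization $w=w'\sigma D^{e}$, which exposes the block $D^{e}$ acting first on $b$: either this block is long enough to kill $b$ outright, or it is short, in which case $\sigma(D^{e}(b))$ ranges over only finitely many elements and a uniform nilpotence bound is recovered for the induction.
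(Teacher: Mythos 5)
The paper does not prove this lemma at all---it is quoted directly from \cite[Theorem 5.3]{P2}---so there is no internal proof to compare against; your argument is correct and is essentially the argument of the cited source. The reduction of well-definedness to the claim that $w(b)=0$ for every word $w$ of fixed $\sigma$-degree and sufficiently large $D$-degree, followed by the induction that peels off the rightmost $\sigma$ and extracts a uniform bound over the finite set $\{\sigma(D^e(b)): 0\le e<N_b\}$, is exactly the right mechanism for dealing with the fact that $\sigma$ and $D$ need not commute, and the bound $L=N_b+L'$ closes the induction correctly.
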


Now, we will investigate when $J(R[x;\sigma, D])\cap R$ is a nil ideal of $R$. Specifically, we will consider Ore extensions $R[x;\sigma,D]$ where $\sigma$ is locally torsion. Throughout this section, the center of $R$ is denoted by $Z(R)$ and the nilradical of $R$ is denoted by $N(R)$.

\begin{Lemma}\cite[Theorem 1.6.14]{P1}\label{l9}
    Let $R$ be a PI ring such that $N(R)=0$.
    Then every nonzero ideal of $R$ intersects $Z(R)$ nontrivially.
\end{Lemma}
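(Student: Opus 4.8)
The plan is to prove this classical statement (it is Rowen's theorem) by means of central polynomials for matrix rings. Since $N(R)=0$, the ring $R$ is semiprime, so $aRa\neq 0$ for every nonzero $a\in R$; I will use this repeatedly. Let $I$ be a nonzero ideal of $R$. Viewed as a (possibly nonunital) subring, $I$ is again a semiprime PI-ring, so it satisfies all polynomial identities of the matrix ring $M_n(K)$ over a commutative ring once $n$ is large enough. I would let $n$ be the least positive integer for which $I$ satisfies all identities of $M_n$; such an $n$ exists because smaller matrix rings satisfy more identities and $I$ is PI.

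The main tool is the Formanek--Razmyslov theorem: for each $n$ there is a multilinear polynomial $g_n(x_1,\dots,x_m)$ in noncommuting variables that is not an identity of $M_n(K)$ for any field $K$, but whose values there are always central (scalar matrices). The formal content I will use is that the commutator $[g_n(x_1,\dots,x_m),x_{m+1}]$ is a polynomial identity of $M_n$. Fixing such a $g_n$ for my chosen $n$, I first record two consequences of multilinearity: if at least one argument lies in the ideal $I$, then every monomial of $g_n$, hence the whole value, lies in $I$; and if all arguments lie in $I$, then the value $z=g_n(\bar a)$ commutes with every element of $I$, since $[g_n,x_{m+1}]\in\mathrm{id}(M_n)\subseteq\mathrm{id}(I)$.

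Next I would upgrade centrality from $I$ to all of $R$. Given $z=g_n(\bar a)$ with all entries in $I$, so $z\in I$ and $z$ is central in $I$, take any $r\in R$ and $w\in I$. Expanding $[z,r]w=zrw-rzw$ and using that $rw,\,zw\in I$ together with the centrality of $z$ in $I$ gives $zrw=rwz$ and $rzw=rwz$, so $[z,r]\,w=0$; symmetrically $w\,[z,r]=0$. Thus $[z,r]$ annihilates $I$ on both sides, and since $[z,r]\in I$ as well, this yields $[z,r]\,R\,[z,r]\subseteq[z,r]\,I=0$, whence $[z,r]=0$ by semiprimeness. Hence every value of $g_n$ on $I$ lies in $I\cap Z(R)$.

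It remains to produce a nonzero such value, that is, to show $g_n$ is not an identity of $I$; this is the crux. By the minimality of $n$, the ring $I$ does not satisfy all identities of $M_{n-1}$, and the deeper input is Razmyslov's result that for a semiprime ring of PI-degree $n$ the central polynomial $g_n$ cannot vanish identically (equivalently, vanishing of $g_n$ on $I$ would force $I$ to satisfy the identities of $M_{n-1}$, contradicting minimality). Choosing $\bar a\in I$ with $z=g_n(\bar a)\neq 0$ then finishes the proof, since $z\in I\cap Z(R)$ is the required nonzero central element. I expect this last non-vanishing step to be the main obstacle, as it is precisely where the structure theory of semiprime PI-rings must enter (central polynomials detecting the PI-degree, or equivalently Posner's theorem applied to the prime subdirect factors of $I$); by contrast, the membership and centrality steps above are elementary consequences of semiprimeness and multilinearity.
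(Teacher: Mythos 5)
The paper does not prove this lemma at all: it is quoted verbatim from Rowen's book \cite[Theorem 1.6.14]{P1}, so there is no in-paper argument to compare against. Your reconstruction is essentially Rowen's own proof of that cited theorem via Formanek--Razmyslov central polynomials, and the steps you carry out in full are correct: a nonzero ideal $I$ of a semiprime PI-ring is itself semiprime PI; multilinearity of $g_n$ puts its values (with arguments from $I$) inside $I$; the identity $[g_n,x_{m+1}]$ of $M_n$ makes those values central in $I$; and the passage from centrality in $I$ to centrality in $R$ via $[z,r]\,I=I\,[z,r]=0$, $[z,r]\in I$, and $[z,r]R[z,r]=0$ is a clean and correct use of semiprimeness. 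The one caveat is that the step you yourself flag as the crux --- that $g_n$ does not vanish identically on a semiprime PI-ring of PI-class exactly $n$ --- is where essentially all of the depth of the theorem lives, and you invoke it rather than prove it; since that non-vanishing statement is itself a standard, independently stated theorem (and is exactly the engine behind Rowen's Theorem 1.6.14), this is a legitimate reduction rather than a circularity, but be aware that your proof is in effect a (correct) derivation of the lemma from its main ingredient rather than a self-contained argument.
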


\begin{Lemma}\label{l10}
    Let $R$ be a ring. Let $a$ be an element in $R$ such that $a+N(R)\in Z(R/(N(R)))$. If $a^t\in N(R)$ for some positive integer $t$, then $a\in N(R)$.
\end{Lemma}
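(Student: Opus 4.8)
The plan is to pass to the quotient $\bar R := R/N(R)$ and reduce the statement to the fact that a central nilpotent element of a semiprime ring must vanish. Writing $\bar a := a + N(R)$, the hypothesis $a + N(R) \in Z(R/N(R))$ says precisely that $\bar a$ is central in $\bar R$, while $a^t \in N(R)$ says $\bar a^t = 0$. Since $N(R)$ is the nilradical, $\bar R$ carries no nonzero nilpotent ideal and is therefore semiprime; I will use the standard test that $\bar R$ is semiprime if and only if $x\bar R x = 0$ forces $x = 0$. It thus suffices to prove $\bar a = 0$, for then $a \in N(R)$, and this is exactly the same setting in which Lemma \ref{l9} is invoked.

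First I would isolate the key step: if $c \in \bar R$ is central with $c^2 = 0$, then $c = 0$. Indeed, centrality gives $c r c = c^2 r = 0$ for every $r \in \bar R$, that is $c \bar R c = 0$, so $c = 0$ by semiprimeness. I want to stress that this uses only the $x\bar R x = 0 \Rightarrow x = 0$ characterization and needs no identity element, which keeps the argument valid for the possibly non-unital rings considered here.

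Next I would run a descending induction on the exponent $t$. The base case $t = 1$ is immediate. For $t \ge 2$, note that $\bar a^{t-1}$ is again central and $(\bar a^{t-1})^2 = \bar a^{2t-2} = 0$, the last equality holding because $2t - 2 \ge t$ and $\bar a^t = 0$. Applying the key step to $c = \bar a^{t-1}$ yields $\bar a^{t-1} = 0$; iterating this reduction (or invoking the induction hypothesis) brings the exponent down to $\bar a = 0$, as required.

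The argument is essentially routine, so I do not anticipate a serious obstacle. The only points that require care are (i) confirming that $R/N(R)$ really is semiprime for the meaning of $N(R)$ in force — this is what licenses the $x\bar R x = 0 \Rightarrow x = 0$ test and is the same ambient hypothesis as in Lemma \ref{l9} — and (ii) arranging the reduction so that it does not covertly depend on $R$ having a $1$; the computation $c\bar R c = 0$ settles this cleanly, since it manipulates only products inside $\bar R$ rather than multiplying by an identity.
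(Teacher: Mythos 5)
Your argument is correct, but it closes the proof by a different mechanism than the paper. Both proofs pass to $\bar R = R/N(R)$ and exploit that $\bar a$ is central and nilpotent there; the divergence is in the last step. The paper shows that the ideal of $\bar R$ generated by $\bar a$, i.e.\ $(a\mathbb{Z}+aR+N(R))/N(R)$, is nil (each of its elements is a sum $n\bar a+\bar a\bar r$, whose powers are, by centrality, multiples of powers of $\bar a$), pulls this back to a nil ideal of $R$, and concludes $a\in N(R)$ from the maximality of $N(R)$ among nil ideals. You instead use only that $\bar R$ is semiprime: a central $c$ with $c^2=0$ satisfies $c\bar R c=c^2\bar R=0$, hence $c=0$, and descending induction on $t$ finishes. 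Your route is slightly more robust: it needs only semiprimeness of the quotient, so it works verbatim whether $N(R)$ denotes the upper or the lower nilradical, and your care about the non-unital case is justified (the $x\bar Rx=0\Rightarrow x=0$ test does hold for semiprime rings without identity, since $RaR$, then $Ra$ and $aR$, then $\mathbb{Z}a$ are successively nilpotent). The paper's argument, by contrast, leans specifically on $N(R)$ being the largest nil ideal, which matches its use of the upper nilradical in the proof of Theorem \ref{t2216}. Both arguments are complete; there is no gap in yours.
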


\begin{proof}
    Since $a^t\in N(R)$, $a$ is nilpotent.
    Since $a+N(R)\in Z(R/(N(R)))$, we have that $(a\mathbb{Z}+aR+N(R))/(N(R))$ is a nil ideal of $R/N$ and so $a\mathbb{Z}+aR+N(R)$ is a nil ideal of $R$. Thus, $a\mathbb{Z}+aR+N(R)$ is contained in $N(R)$. 
    That means that $a\in N(R)$.
\end{proof}

\begin{Theorem} \label{t2216}
    Let $R$ be a PI ring.
    Let $\sigma$ be a locally torsion automorphism of $R$ and $D$ be a $\sigma$-derivation on $R$.
    Then $J(R[x;\sigma,D])\cap R$ is a nil ideal of $R$.
\end{Theorem}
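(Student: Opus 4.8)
The plan is to pass to a semiprime PI quotient, extract a central element by Rowen's theorem (Lemma \ref{l9}), and annihilate it using the quasi-regularity packaged in the Jacobson radical together with the local torsion of $\sigma$. First I would reduce to the unital case by the Dorroh argument from Section 2, and set $J=J(R[x;\sigma,D])\cap R$, which is a $(\sigma,D)$-ideal of $R$ by Lemmas \ref{l1} and \ref{l2}. Since $\sigma$ is an automorphism it preserves the nilradical, and the prime radical is stable under any $\sigma$-derivation, so $N:=N(R)$ is itself a $(\sigma,D)$-ideal. Hence $\bar\sigma,\bar D$ descend to $\bar R:=R/N$, the canonical surjection $R[x;\sigma,D]\to\bar R[x;\bar\sigma,\bar D]$ carries the Jacobson radical into the Jacobson radical, and therefore sends $J$ into $\bar J:=J(\bar R[x;\bar\sigma,\bar D])\cap\bar R$. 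Because $N$ is nil, it suffices to prove $\bar J=0$, for then $J\subseteq N$ and $J$ is nil.

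Now $\bar R$ is semiprime PI and $\bar\sigma$ is still locally torsion. Suppose for contradiction that $\bar J\neq0$. By Lemma \ref{l9} there is a nonzero $\bar c\in\bar J\cap Z(\bar R)$; observe that $Z(\bar R)$ is reduced, so a central nilpotent element must vanish, which is precisely Lemma \ref{l10} in the semiprime situation. By Lemma \ref{l1} each conjugate $\bar\sigma^{i}(\bar c)$ again lies in $\bar J\cap Z(\bar R)$, and by local torsion there are finitely many of them, say with period $n$. The elementary symmetric functions $e_1,\dots,e_n$ of $\bar c,\bar\sigma(\bar c),\dots,\bar\sigma^{n-1}(\bar c)$ are then $\bar\sigma$-invariant central elements of $\bar J$. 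I would run the nilpotency engine below on each $e_k$ to conclude that every $e_k$ is nilpotent, hence $0$ in the reduced ring $Z(\bar R)$. With all symmetric functions vanishing, each conjugate is a root of $t^{n}$ in the commutative ring $Z(\bar R)$, hence nilpotent, hence $0$; so $\bar c=0$, contradicting our choice. Thus $\bar J=0$.

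The engine is as follows. Let $z\in\bar J$ be central and $\bar\sigma$-fixed. Since $z$ lies in the ideal $J(\bar R[x;\bar\sigma,\bar D])$, so does $zx$, whence $1-zx$ is a unit of $\bar R[x;\bar\sigma,\bar D]$, say with inverse $g$ of degree $m$. Because $z$ is central and $\bar\sigma(z)=z$, the top $x$-coefficient of $(zx)^{k}$ is exactly $z^{k}$, the derivation contributing only lower-order terms; in the derivation-free case one has $g=\sum_{k=0}^m z^{k}x^{k}$ and the identity $(1-zx)g=1-z^{m+1}x^{m+1}$ forces $z^{m+1}=0$. Comparing top coefficients in $(1-zx)g=1$ and using $\bar\sigma^{n}=\mathrm{id}$ on $z$ should likewise force a relation of the form $z^{t}=0$, giving the nilpotency needed above.

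The main obstacle is exactly this last step once $D\neq0$: when $z$ is only central and $\bar\sigma$-fixed, the lower-order $\bar D$-terms in $g=\sum b_jx^j$ destroy the clean bookkeeping $b_j=z^{j}$ available in the automorphism case of Bedi and Ram, so passing from "$1-zx$ is a unit" to "$z$ is nilpotent" is delicate. I expect this to require controlling $\bar D(z)$, which for central $\bar\sigma$-fixed $z$ satisfies the intertwining relation $\bar D(z)\,r=\bar\sigma(r)\,\bar D(z)$ for all $r$ and is therefore tightly constrained in a semiprime PI ring, or else passing to $y=x^{n}$ and reducing to the pure-derivation PI case settled by Madill. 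A secondary point requiring justification is the stability of the prime radical $N(R)$ under the $\sigma$-derivation $D$.
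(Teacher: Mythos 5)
Your skeleton agrees with the paper's (which follows Madill): work modulo the nilradical $N=N(R)$, extract a central element of $J(R[x;\sigma,D])\cap R$ via Lemma \ref{l9}, use local torsion to manufacture a $\sigma$-fixed central element, and force nilpotency out of quasi-regularity; your elementary-symmetric-function reduction to the $\sigma$-fixed case is in fact a tidier alternative to the paper's ``largest $k$'' maximality argument. But two steps you rely on are genuinely missing. First, your reduction to $\bar R=R/N$ requires $N$ to be a $(\sigma,D)$-ideal so that $\bar R[x;\bar\sigma,\bar D]$ exists and receives a surjection from $R[x;\sigma,D]$. A $\sigma$-derivation (even an ordinary derivation) need not preserve the nilradical: for $R=\mathbb{Z}_2[t]/(t^2)$ with $D=d/dt$ one has $N(R)=(t)$ but $D(t)=1\notin N(R)$. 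This is not a ``secondary point requiring justification''; it is false in general, and the paper is structured precisely to avoid it. The paper never forms the quotient Ore extension: it writes the quasi-regularity equation $f(x)+ax^{n}+f(x)ax^{n}=0$ inside $R[x;\sigma,D]$, extracts exact coefficient identities using Lemma \ref{l7}, and only then reduces those identities modulo $N$ as elements of $R$, where centrality of $a$ modulo $N$ is all that is used.

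Second, and more seriously, you leave the engine itself as a hope: the passage from ``$zx$ is quasi-regular'' to ``$z^{t}\in N$'' when $D\neq 0$ is the entire technical content of the theorem, and neither of your proposed escape routes works as stated. Controlling $\bar D(z)$ via the relation $\bar D(z)r=\bar\sigma(r)\bar D(z)$ does not by itself recover the bookkeeping $b_j=z^{j}$, and the substitution $y=x^{n}$ does not land you in Madill's differential-polynomial setting, since $\sigma$ is only locally torsion (there is no global $n$ with $\sigma^{n}=\mathrm{id}$) and $x^{n}a$ produces lower-order terms in $x$ that are not polynomials in $x^{n}$. The paper's actual mechanism is different: one does not control $D(z)$ at all. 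Instead, starting from the top-degree identity $b_m\sigma^{m}(a)=0$, one proves by downward induction that each coefficient $b_{m-j}$ of the quasi-inverse is annihilated modulo $N$ by a suitable finite product of the conjugates $a,\sigma(a),\dots,\sigma^{n-1}(a)$; the point is that these annihilators, being central modulo $N$, can be commuted past all the mixed $\sigma$-$D$ monomials $w(a)$ appearing in Lemma \ref{l7}. Feeding this into the degree-$n$ coefficient equation yields $a\sigma(a)\cdots\sigma^{n-1}(a)\in N$ (via Lemma \ref{l10}), after which the $\sigma$-fixed element is handled by running the same induction a second time. Without this inductive annihilation argument, your proof does not close.
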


\begin{proof}
    The approach follows \cite{P4}.
    Let $S=J(R[x;\sigma,D])\cap R$ and let $N=N(R)$.
    We know that $S$ is an ideal of $R$. We will prove that $S$ is nil.
    Consider a quotient ring $L:=(S+N)/N$. 
    Note that $L$ is an ideal of $R/N$, which is a PI ring with the zero upper nilradical.

    If $L=0$, then $S\subseteq N$ and so $S$ is nil.
    Suppose that $L\neq 0$.
    By Lemma \ref{l9}, $L\cap Z(R/N)\neq \{0\}$.
    Take a nonzero element $a+N\in L\cap Z(R/N)$. 
    Then $a\in S$, $a+N\neq N$, and $a+N\in Z(R/N)$.
    Since $\sigma$ is locally torsion, there is an integer $n>0$ such that $\sigma^n(a)=a$.
    Let $R^*$ be a ring $R$ with adjoined identity.
    Since $a\in S\subseteq J(R[x;\sigma,D])$ and $J(R[x;\sigma,D])$ is an ideal of $R^*[x;\sigma,D]$, we have that $ax^n\in J(R[x;\sigma,D])$ and so there exists $f(x)=\sum_{i=0}^mb_ix^i\in J(R[x;\sigma,D])$ such that
    \begin{equation}\label{e1}
        f(x)+ax^n+f(x)ax^n=0.
    \end{equation}

    \noindent By comparing coefficients of each term, we can get that $b_0=b_1=\dots=b_{n-1}=0$. 
    So, write $f(x)=\sum_{i=n}^mb_ix^i$.
    By Lemma \ref{l7}, equation (\ref{e1}) becomes
    \begin{align*}
        0&=\sum_{i=n}^mb_ix^i+ax^n+\left(\sum_{i=n}^mb_ix^iax^n\right)\\
        &=\sum_{i=n}^mb_ix^i+ax^n+\left(\sum_{i=n}^m\sum_{j=0}^i\sum_{\substack{deg_{\sigma}(w)=j\\deg_{d}(w)=i-j}}b_iw(a)x^{n+j}\right).
    \end{align*}

    \noindent Compare coefficients of each term. 
    From $x^{n+m}$ term, we get 
    \begin{align}\label{e2}
        b_m\sigma^m(a)=0.
    \end{align}
    From $x^{m+i}$ for $i=1,2,\dots,n-1$, we get
     \begin{align}\label{e3}
        \sum_{l=m-n+i}^mb_lw_{m-n+i,l-m+n-i}(a)=0
    \end{align}
    where $w_{i,l-i}(a)=\sum_{\substack{deg_\sigma(w)=i\\deg_d(w)=l-i}}w(a)$.
    From $x^{n+i}$ term for $i=1,2,3,\dots,m-n$, we get
    \begin{align}\label{e4}
        b_{n+i}+\sum_{l=i}^mb_lw_{i,l-i}(a)=0.
    \end{align}
    From $x^n$ term, we get 
    \begin{align}\label{e5}
        b_n+a+\sum_{i=n}^mb_iD^i(a)=0.
    \end{align}

    Let $A$ be a set of finite products of $a, \sigma(a), \sigma^2(a), \dots, \sigma^{n-1}(a)$. 
    We claim that there exists some $r_j\in A$ such that $b_{m-j}r_j\in N$ for all $j=0,1,\dots,m-n$.
    We will proceed by induction on $j$.
    For $j=0$, equation (\ref{e2}) shows $b_m\sigma^m(a)=0$. Since $\sigma^n(a)=a$, so $\sigma^m(a)\in A$.    
    
    Suppose that the result holds for any $j\leq k$ with $k\in \{0,1,2,3,\dots m-n \}$.
    First, consider the case $k+1< n$.
    In equation (\ref{e3}), replace $i$ with $n-k-1$.
     \begin{equation}
         \begin{split}\label{e6}
             0&=\sum_{l=m-k-1}^mb_lw_{m-k-1,l-m+k+1}(a)\\
        &=b_{m-k-1}w_{m-k-1,0}(a)+\sum_{l=m-k}^mb_lw_{m-k-1,l-m+k+1}(a)\\
        &=b_{m-k-1}\sigma^{m-k-1}(a)+\sum_{l=m-k}^mb_lw_{m-k-1,l-m+k+1}(a).
         \end{split}
     \end{equation}
    By the inductive hypothesis, there are $r_{m-l}\in A$ such that $b_lr_{m-l}\in N$ for $l=m-k, m-k+1, \dots, m$.
    Multiply each side of equation (\ref{e6}) by $r_{k}r_{k-1}\dots r_{0}$.
    Since $a+N\in Z(R/N)$ and $\sigma$ is an automorphism, $\sigma(a)+N\in Z(R/N)$ and so $r+N\in Z(R/N)$ for all $r\in A$.
        \begin{align*}
        0&=b_{m-k-1}\sigma^{m-k-1}(a)r_{k}r_{k-1}\dots r_{0}\\
        &\hspace{3cm}+\sum_{l=m-k}^mb_lw_{m-k-1,l-m+k+1}(a)r_{k}r_{k-1}\dots r_{0}\\
        &= b_{m-k-1}\sigma^{m-k-1}(a)r_{k}r_{k-1}\dots r_{0}\hspace{0.4cm}(\text{mod}\hspace{0.2cm}N).
    \end{align*}
    Since $r_{k+1}:=\sigma^{m-k-1}(a)r_{k}r_{k-1}\dots r_{0}\in A$, the result holds.

    Next, consider the case $k+1\geq n$.
    In equation (\ref{e4}), replace $i$ with $m-k-1$.
    \begin{align*}
        b_{n+m-k-1}+\sum_{l=m-k-1}^m&b_lw_{m-k-1,l-m+k+1}(a)=0\\
         b_{n+m-k-1}+b_{m-k-1}w_{m-k-1,0}(a)+&\sum_{l=m-k}^mb_lw_{m-k-1,l-m+k+1}(a)=0.
    \end{align*}
    By the inductive hypothesis, there are $r_j$ and $r_{k-n+1}$ in $A$ such that $b_{m-j}r_j\in N$ and $b_{n+m-k-1}r_{k-n+1}\in N$ for $0\leq j\leq k$.
    Multiply each side by $\xi=r_0r_1\dots r_{k-1}r_{k-n+1}\in A$.
    \begin{align*}
        b_{n+m-k-1}\xi+b_{m-k-1}w_{m-k-1,0}(a)\xi+\sum_{l=m-k}^mb_lw_{m-k-1,l-m+k+1}(a)\xi=0.
    \end{align*}

    \noindent Since $r+N\in Z(R/N)$ for all $r\in A$, thus $b_{n+m-k-1}\xi\in N$ and $b_lw_{m-k-1,l-m+k+1}(a)\xi\in N$ for $m-k\leq l\leq m$.
    We have that $b_{m-k-1}w_{m-k-1,0}(a)\xi=b_{m-k-1}\sigma^{m-k-1}(a)\xi\in N$.
    We can take $r_{k+1}:=\sigma^{m-k-1}(a)\xi\in A$.

    By the claim, there exist $r_n, r_{n+1},\dots,r_m\in A$ such that $b_ir_i\in N$ for $i=n,n+1,\dots,m$.
    Multiply equation (\ref{e5}) by $r_nr_{n+1}\dots r_m$.
    Since $r+N\in Z(R/N)$ for all $r\in A$,
    \begin{align*}
        0&=b_nr_nr_{n+1}\dots r_m+ar_nr_{n+1}\dots r_m+\sum_{i=n}^mb_iD^i(a)r_nr_{n+1}\dots r_m\\
        &= b_nr_nr_{n+1}\dots r_m+ar_nr_{n+1}\dots r_m+\sum_{i=n}^mb_ir_nr_{n+1}\dots r_mD^i(a) \hspace{0.4cm} (\text{mod}\hspace{0.2cm} N)\\
        &= ar_nr_{n+1}\dots r_m \hspace{0.4cm} (\text{mod}\hspace{0.2cm} N).
    \end{align*}

    \noindent Since $r_nr_{n+1}\dots r_m\in A$, we have that $r_nr_{n+1}\dots r_m+N=a^{t_0}(\sigma(a))^{t_1}\dots(\sigma^{n-1}(a))^{t_{n-1}}+N$ for some non-negative integers $t_0, t_1, \dots, t_{n-1}$.
    Since $ar_nr_{n+1}\dots r_m\in N$,  we gain that $(a\sigma(a)\sigma^2(a)\dots \sigma^{n-1}(a))^t\in N$ for some $t\geq max\{t_0, t_1, \dots, t_{n-1}\}$. By Lemma \ref{l10}, we know that $a\sigma(a)\sigma^2(a)\dots\sigma^{n-1}(a)\in N$.
    Let $c_0=a, c_1=\sigma(a), \dots, c_{n-1}=\sigma^{n-1}(a)$. Then $c_0c_1\dots c_{n-1}\in N$.
    We can find the largest integer $k$ such that there exists $i_1<i_2<\dots<i_k$ with $c_{i_1}c_{i_2}\dots c_{i_k}\notin N$ and $c_{i_1}c_{i_2}\dots c_{i_k}c_{i_{k+1}}\in N$ where $c_{i_1}, \dots, c_{i_k}, c_{i_{k+1}}$ are distinct. 
    Let $a_1=c_{i_1}c_{i_2}\dots c_{i_k}$.
    Then $a_1\notin N$, $a_1\in S$, $a_1+N\in Z(R/N)$, and $\sigma^n(a_1)=a_1$.
    Let $a_2=a_1+\sigma(a_1)+\dots+\sigma^{n-1}(a_1)$.
    Then $a_2\in S$ and $a_2+N\in Z(R/N)$.
    If $a_2\in N$, then $a_1a_2\in N$ and $a_1\sigma^j(a_1)\in N$ for any positive integer $j$ by the maximality of $k$, and so $a_1^2\in N$. By Lemma \ref{l10}, $a_1\in N$ which is a contradiction. 
    Thus, $a_2\notin N$. Moreover, since $\sigma^n(a_1)=a_1$, we have that $\sigma(a_2)=a_2$.
    Since $a_2\notin N$, $a_2\in S$, and $a_2+N\in Z(R/N)$, we can proceed a similar way to the previous steps comparing coefficients of $g(x)+a_2x+g(x)a_2x=0$ where $g(x)$ is the quasi-inverse of $a_2x$.
    We gain that $a_2^p\in N$ for some positive integer $p$. By Lemma \ref{l10}, $a_2\in N$ which is a contradiction.
    Hence, $L=0$.
\end{proof}

\begin{Theorem} \label{t2217}
    Let $R$ be an algebra over a field of characteristic $p>0$, $\sigma$ be a locally torsion automorphism of $R$, and $D$ be a locally nilpotent $\sigma$-derivation on $R$.
    If $\sigma D=D\sigma$, then $J(R[x;\sigma,D])\cap R$ is a nil ideal of $R$.
\end{Theorem}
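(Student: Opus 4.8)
The plan is to show that every $a\in S:=J(R[x;\sigma,D])\cap R$ is nilpotent. Since $\sigma D=D\sigma$ says exactly that $D$ is $q$-skew with $q=1$, Lemmas~\ref{l1} and \ref{l2} apply, so $S$ is a two-sided ideal that is both $\sigma$-stable and $D$-stable. As in Theorem~\ref{t2216}, I would reduce modulo the nilradical $N=N(R)$, so that $\bar R=R/N$ is semiprime and $N$ is the largest nil ideal, and aim to prove that $\bar S=(S+N)/N$ vanishes.

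The arithmetic core is the characteristic-$p$ collapse of Proposition~\ref{p1}. With $q=1$ its formula reads $x^{k}r=\sum_{i=0}^{k}\binom{k}{i}\sigma^{i}D^{k-i}(r)x^{i}$, and since $p\mid\binom{p^{k}}{i}$ for $0<i<p^{k}$, this telescopes to $x^{p^{k}}r=\sigma^{p^{k}}(r)x^{p^{k}}+D^{p^{k}}(r)$ for every $r$ and every $k$. Given $a\in S$, local nilpotence of $D$ lets me pick $k$ with $D^{p^{k}}(a)=0$; enlarging $k$ I may also assume $p^{k}$ exceeds the $p$-part of the $\sigma$-period of $a$, so the orbit $o_{i}:=\sigma^{ip^{k}}(a)$ has period $s$ coprime to $p$, i.e. $\sigma^{sp^{k}}(a)=a$. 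Because $D^{p^{k}}(o_{i})=\sigma^{ip^{k}}D^{p^{k}}(a)=0$, the displayed relation gives $x^{p^{k}}o_{i}=o_{i+1}x^{p^{k}}$, whence $(ax^{p^{k}})^{j}=c_{j}x^{jp^{k}}$ with $c_{j}=o_{0}o_{1}\cdots o_{j-1}$.

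Next I would exploit that $ax^{p^{k}}\in J(R[x;\sigma,D])$ (as $a\in J$ and $J$ is an ideal of $R^{*}[x;\sigma,D]$), so $1+ax^{p^{k}}$ is a unit in $R^{*}[x;\sigma,D]$. Embedding into the skew power series ring via Lemma~\ref{l8} (legitimate since $D^{*}$ stays locally nilpotent), this polynomial inverse must coincide with the geometric series $\sum_{j\ge 0}(-1)^{j}c_{j}x^{jp^{k}}$; being a polynomial, it forces $c_{j}=0$ for all large $j$. Since the orbit has period $s$ we have $c_{sl}=c^{l}$ with $c=o_{0}\cdots o_{s-1}$, so $c$ is nilpotent, and applied to the $\sigma^{p^{k}}$-fixed orbit sum $u=\sum_{i=0}^{s-1}o_{i}$ (period $1$) the same argument makes $u$ nilpotent. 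Extending scalars to $F(\zeta)$ with $\zeta$ a primitive $s$-th root of unity (legitimate by Lemma~\ref{l3}, which keeps these elements inside $S'=J(R'[x;\sigma',D'])\cap R'$) and diagonalising the order-$s$, prime-to-$p$ action of $\sigma^{p^{k}}$ on the orbit, each eigencomponent $v_{m}=\sum_{i}\zeta^{-mi}o_{i}$ satisfies $\sigma^{p^{k}}(v_{m})=\zeta^{m}v_{m}$, so $c_{d}(v_{m})$ is a unit times $v_{m}^{\,d}$ and the same power-series argument makes every $v_{m}$ nilpotent.

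The main obstacle is the final bootstrap: passing from ``$c$, $u$, and every eigencomponent $v_{m}$ are nilpotent'' to ``$a$ itself is nilpotent'', equivalently $\bar S=0$. Here the proof of Theorem~\ref{t2216} gives no help, because without a PI hypothesis Lemma~\ref{l9} and the centrality it provides are unavailable, so Lemma~\ref{l10} cannot be invoked and the plain sum $a=\tfrac1s\sum_{m}v_{m}$ of nilpotents need not be nilpotent. I expect this to be the crux and would attack it in $\bar R$: the nilpotence of all orbit sums already excludes the basic obstruction (such as the $e_{11},e_{22}$ configuration, whose orbit sum is $1$), while the family $\{v_{m}\}$ makes $\bar S'$ a ring graded by the group of $s$-th roots of unity in which every homogeneous element is nilpotent. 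The remaining task is to show that a semiprime ring carrying such a grading, together with the literal vanishing $o_{0}o_{1}\cdots o_{J-1}=0$ inherited from the polynomial (not merely power-series) quasi-inverse, cannot contain a nonzero ideal of this type; combining this grading argument with semiprimeness to force $\bar a=0$ is the step I anticipate will demand the most care.
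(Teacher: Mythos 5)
Your first two paragraphs are sound, but the proof is not complete: as you yourself concede, the ``final bootstrap'' from the nilpotence of the orbit products $c_{j}=o_{0}o_{1}\cdots o_{j-1}$ (and of the orbit sums and eigencomponents) to the nilpotence of $a$ is left open, and it is a genuine gap, not a routine verification. The implication ``$o_{0}o_{1}\cdots o_{J-1}=0$ for some $J$ implies $a$ nilpotent'' is false in general (take $a=e_{11}$, $\sigma(a)=e_{22}$ in a matrix ring: the orbit product vanishes but $a$ is idempotent), so everything rests on the grading/orbit-sum analysis in a semiprime quotient, which you sketch only as a programme. Moreover this difficulty is self-inflicted: it comes from choosing the exponent $p^{k}$, which fixes $a$ under $D^{p^{k}}$ but not under $\sigma^{p^{k}}$, leaving a nontrivial $\sigma^{p^{k}}$-orbit of period $s$ coprime to $p$.

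The paper's proof avoids the orbit entirely by choosing the exponent $mp^{m}$, where $m$ is simultaneously a multiple of the $\sigma$-period of $a$ and large enough that $D^{p^{m}}(a)=0$. Then $\sigma^{mp^{m}}(a)=a$, and Proposition~\ref{p1} (with $q=1$, using that $\binom{mp^{m}}{i}\equiv 0 \pmod p$ unless $p^{m}\mid i$) gives the exact commutation $x^{mp^{m}}a=ax^{mp^{m}}$, hence $(ax^{mp^{m}})^{i}=a^{i}x^{imp^{m}}$. The polynomial quasi-inverse of $ax^{mp^{m}}$ must then coincide with the geometric series $\sum_{i\ge 1}(-a)^{i}x^{imp^{m}}$, forcing $a^{i}=0$ directly --- no reduction modulo $N(R)$, no extension of scalars, and no grading argument are needed. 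One further point worth noting: to justify the power-series comparison the paper does not simply cite Lemma~\ref{l8} for $R$; it first builds the subring $A$ generated by $a$, the coefficients $b_{i}$ of the quasi-inverse, and their images under $\sigma^{l}D^{j}$, and uses $\sigma D=D\sigma$ together with local torsion to show $D$ is \emph{nilpotent} on $A$, so that $A[[x;\sigma,D]]$ is an honest ring in which the uniqueness of quasi-inverses can be applied. You should either adopt the exponent $mp^{m}$ and this subring construction, or supply a complete proof of your bootstrap step; as written the argument does not establish the theorem.
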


\begin{proof}
    The approach follows \cite{P8}.
    We know that $J(R[x;\sigma,D])\cap R$ is an ideal of $R$. We will show that $J(R[x;\sigma,D])\cap R$ is nil.
    Take any element $a\in J(R[x;\sigma,D])\cap R$.
    Since $\sigma$ is locally torsion and $D$ is locally nilpotent, we can take an integer $ m>0$ such that $\sigma^{m}(a)=a$ and $D^{p^m}(a)=0$.
    By Proposition \ref{p1}, we have $x^{mp^m}a=ax^{mp^m}$.
    Let $R^*$ be a ring $R$ with adjoined identity.
    Since $a\in J(R[x;\sigma,D])\cap R\subseteq J(R[x;\sigma,D])$ and $J(R[x;\sigma,D])$ is an ideal of $R^*[x;\sigma,D]$, $ax^{mp^m}$ has the quasi-inverse $p(x)=\sum_{i=0}^tb_ix^i\in R[x;\sigma,D]$.
    Let $A$ be a subring of $R$ generated by $a$, $b_i$, $\sigma^l(D^j(a))$, $\sigma^l(D^j(b_i))$ for integers $0\leq i\leq t$ and $j\geq 0$ and $l\in \mathbb{Z}$.
    Since $\sigma D=D\sigma$, $\sigma$ is an automorphism of $A$ and $D$ is a $\sigma$-derivation on $A$, so $A[x;\sigma,D]$ is a subring of $R[x;\sigma,D]$. Note that $ax^{mp^m}$ is quasi-regular in $A[x;\sigma,D]$.
    
    Since $D$ is locally nilpotent and $\sigma$ is locally torsion, there is an integer $k=mq$ for some $q>0$ such that $\sigma^{k}(b_i)=b_i$ and $D^{p^k}(b_i)=0$ for all $i=0,1,2,\dots,t$.
    By Proposition \ref{p1}, $x^{kp^k}b_i=b_ix^{kp^k}$ for all $i=0,1,2,\dots,t$.
    Since $\sigma D=D\sigma$, we have that
    \begin{align*}
        x^{kp^k}\sigma^l(D^j(b_i))-\sigma^l(D^j(b_i))x^{kp^k}
        &=x^{kp^k}\sigma^l(D^j(b_i))-D^j(\sigma^l(b_i))x^{kp^k}\\
        &=x^{kp^k}\sigma^l(D^j(b_i))-D^j(\sigma^{l+kp^k}(b_i))x^{kp^k}\\
        &=x^{kp^k}\sigma^l(D^j(b_i))-\sigma^{kp^k}(\sigma^l(D^j(b_i)))x^{kp^k}\\
        &=D^{kp^k}(\sigma^l(D^{j}(b_i)))\\
        &=\sigma^l(D^{j+kp^k}(b_i))\\
        &=0.
    \end{align*}
    Moreover, since $k=mq$, we can check that $\sigma^k(a)=a$, $D^{p^k}(a)=0$, and $x^{kp^k}a=ax^{kp^k}$. Similar to $x^{kp^k}\sigma^l(D^j(b_i))=\sigma^l(D^j(b_i))x^{kp^k}$, we can find that $x^{kp^k}\sigma^l(D^j(a))=\sigma^l(D^j(a))x^{kp^k}$.
    So, $x^{kp^k}$ commutes with all elements in $A$.
    Therefore, $D$ is nilpotent on $A$.

   Consider $A[x;\sigma,D]\subseteq A[[x;\sigma,D]]$ which is a power series ring in the indeterminate $x$.
    Recall that $x^{mp^m}a=ax^{mp^m}$, thus $(ax^{mp^m})^i=a^ix^{imp^m}$.
    In $A[[x;\sigma,D]]$, we can see that $q(x)=(-ax^{mp^m})+(-ax^{mp^m})^2+(-ax^{mp^m})^3+\dots$ is the quasi-inverse of $ax^{p^m}$, that is,
    \begin{align*}
        ax^{mp^m} + \left((-ax^{mp^m})+(-ax^{mp^m})^2+\dots\right) + ax^{mp^m}\left((-ax^{mp^m})+(-ax^{mp^m})^2+\dots\right) =0.
    \end{align*}
    By the uniqueness of a quasi-inverse, $q(x)=p(x)\in A[x;\sigma,D]\subseteq A[[x;\sigma,D]]$ and so 
    \begin{align*}
        \sum_{i=0}^{\infty}(-ax^{mp^m})^i=\sum_{i=0}^{\infty}(-a)^ix^{imp^m}\in A[x;\sigma,D]
    \end{align*}
    Hence, $a^i=0$ for some $i$.
\end{proof}

\section{Acknowledgements}

The author is grateful to Professor Mikhail Chebotar for his assistance and guidance.


\begin{thebibliography}{99}


\bibitem{P7} S. A. Amitsur, \textit{ Radicals of polynomial rings}, Canadian J. Math., \textbf{8} (1956), 355-361.

\bibitem{P13} S. S. Bedi and J. Ram, \textit{Jacobson radical of skew polynomial rings and skew group rings}, Israel J. Math., \textbf{35} (1980), no.4, 327–338.

\bibitem{bg} J. Bergen and P. Grzeszczuk, \textit{Skew power series rings of derivation type}, J. Algebra Appl., \textbf{10} (2011), no.6, 1383-1399.

\bibitem{bo} N. Bourbaki, \textit{Commutative algebra}, Addison-Wesley Publishing Company, Massachusetts, 1972.

\bibitem{P5} M. Ferrero, K. Kishimoto, and K. Motose, \textit{On radicals of skew polynomial rings of derivation type}, J. London Math. Soc., (2)\textbf{28} (1983), no.1, 8-16.

\bibitem{P2} B. Greenfeld, A. Smoktunowicz, and M. Ziembowski, \textit{Five solved problems on radicals of Ore extensions}, Publ. Mat., \textbf{63} (2019), no 2, 423-444.


\bibitem{gl} K. R. Goodearl and E. R. Letzter, \textit{Prime ideals in skew and q-skew polynomial rings}, Mem. Amer. Math. Soc., \textbf{109} (1994), no.521, 106 pp. 


\bibitem{P15} P. Grzeszczuk, A. Leroy and J. Matczuk, \textit{Artinian property of constants of algebraic q-skew derivations}, Israel J. Math. \textbf{121} (2001), 265-284.

\bibitem{ko} G. Köthe, \textit{Die Struktur der Ringe, deren Restklassenring nach dem Radikal vollständig reduzibel ist}, Math. Z., \textbf{32} (1930) no.1, 161-186.

\bibitem{P11} J. Krempa, \textit{Logical connections between some open problems concerning nil rings}, Fund. Math, \textbf{76} (1972), 121-130.


\bibitem{P4} B. W. Madill, \textit{On the Jacobson radical of skew polynomial extensions of rings satisfying a polynomial identity}, Comm. Algebra, \textbf{44} (2016), no.3, 913-918.

\bibitem{p} D. S. Passman, \textit{Infinite Group Rings}, Marcel Dekker Inc, New York, 1971.

\bibitem{ro} L. H. Rowen, \textit{Ring Theory. Vol. I}, Pure and Applied Mathematics, Vol. 127, Academic Press, Inc., Boston, MA, 1988.

\bibitem{P1} L. H. Rowen, \textit{Polynomial identities in ring theory}, Pure and Applied Mathematics, vol. 84, Academic Press, Inc., Harcourt Brace Jovanovich, Publishers, New York-London, 1980.

\bibitem{P8} A.  Smoktunowicz, \textit{How far can we go with Amitsur's theorem in differential polynomial rings?}, Israel J. Math., \textbf{219} (2017), no.2, 555-608.


\bibitem{P14} Y. -T. Tsai, T. -Y. Wu and C. -L. Chuang, \textit{Jacobson radicals of Ore extensions of derivation type}, Comm. Algebra, \textbf{35} (2007), no.3, 975-982.

 


\end{thebibliography}
\end{document}